\DeclareMathOperator{\Ad}{Ad}
\DeclareMathOperator{\Exp}{exp}
\DeclareMathOperator{\C}{constant}
\newcommand{\fr}{\mathfrak}
\newcommand{\R}{\mathbb{R}}
\renewcommand{\C}{\mathbb{C}}
\newcommand{\al}{\alpha}
\DeclareMathOperator{\SO}{SO}
\DeclareMathOperator{\Sp}{Sp}
\DeclareMathOperator{\U}{U}
 \newtheorem{lemma} {Lemma} [section]
\newtheorem{theorem}[lemma]{Theorem} 
\newtheorem{remark}[lemma] {Remark} 
\newtheorem{prop} [lemma]{Proposition}  
\newtheorem{definition}[lemma] {Definition} 
\newtheorem{corol}[lemma] {Corollary} 
\newtheorem{example}[lemma] {Example}
\begin{document}

\title{Geodesics in generalized Wallach spaces} 
\author{Andreas Arvanitoyeorgos  and Nikolaos Panagiotis Souris}
\address{University of Patras, Department of Mathematics, GR-26500 Patras, Greece}
\email{arvanito@math.upatras.gr}
\email{ntgeva@hotmail.com}
\medskip

   \begin{abstract}
We study geodesics in 
generalized Wallach spaces which are expressed as orbits of products of three exponential terms.  These are homogeneous spaces $M=G/K$ whose isotropy representation decomposes into a direct sum of three submodules $\frak{m}=\frak{m}_1\oplus\frak{m}_2\oplus\frak{m}_3$,
  satisfying the relations $[\frak{m}_i,\frak{m}_i]\subset \frak{k}$.
Assuming that the submodules $\frak{m}_i$ are pairwise non isomorphic, we study geodesics on such spaces of the form $\gamma (t)=\exp (tX)\exp (tY)\exp (tZ)\cdot o$, where $X\in\fr{m}_1, Y\in\fr{m}_2, Z\in\fr{m}_3$ ($o=eK$), with respect to a $G$-invariant metric.
Our investigation imposes certain restrictions on the $G$-invariant metric, so the geodesics turn out to be orbits of two exponential terms.
 We  give a point of view using Riemannian submersions.
 As an application, we describe geodesics in generalized flag manifolds with three isotropy summands and with 
second Betti number $b_2(M)=2$, and in the Stiefel manifolds $SO(n+2)/S(n)$.  We relate our results to geodesic orbit spaces (g.o. spaces).

  \medskip
\noindent  {\it Mathematics Subject Classification.} Primary 53C25; Secondary 53C30. 

\medskip
\noindent {\it Keywords}:    Geodesic; homogeneous space; generalized Wallach space; generalized flag manifold; Stiefel manifold;  isotropy representation; geodesic orbit space 
 \end{abstract}  

\maketitle


 \section{Introduction}
\markboth{Andreas Arvanitoyeorgos and Nikolaos Panagiotis Souris}{Geodesics in homogeneous spaces with three isotropy summands}

Geodesic curves  in a Riemannian manifold are studied not only for their geometrical implications but  for their physical  significance as well (\cite{Arn}).
For a Riemannian homogeneous manifold $(M=G/K, g)$ where $g$ is a $G$-invariant metric, one naturally studies homogeneous geodesics through the origin $o=eK$, i.e. orbits of a one-parameter subgroup of the Lie group $G$.
These are curves of the form
\begin{equation}\label{orbit}
\gamma (t)=\Exp (tX)\cdot o,
\end{equation}
 where $X$ is a non zero vector in the Lie algebra $\fr g$ of $G$.
 Curves of this form had been studied long time ago by many mathematicians such as R. Herman, B. Kostant, E.B. Vinberg.
 
 Riemannian homogeneous spaces of special significance are those whose geodesics are of the form
 (\ref{orbit}), and these are known as geodesic orbit spaces (g.o. spaces).
 The study of these spaces was initiated by O. Kowalski and L. Vanhecke in \cite{KV} and since then they have been investigated in depth by several authors both for Riemannian and semi-Riemannian manifolds (e.g.  \cite{DKN}, \cite{Ta}, \cite{Du}).  
 In particular, for $G$ a simple Lie group, in \cite{AA} the first author and D. Alekseevsky 
 classified generalized flag manifolds with homogeneous geodesics (i.e. which  are g.o. spaces).  The list turned out to be quite short, namely the manifold $\SO (2\ell +1)/\U (\ell)$ of complex structures in $\R ^{2\ell +2}$ and the complex projective space $\C P^{2\ell -2}=\Sp (\ell)/\U (1)\times \Sp (\ell -1)$.  Notice that the isotropy representation of these spaces has two isotropy summands and their associated painted Dynkin diagram has one black root of Dynkin mark 2. 
 
Since a general description of  geodesics in a homogeneous space remains a difficult but an important problem,
it is natural to search for geodesics  that generalize the concept of homogeneous geodesics.
In \cite{Do} R. Dohira gave a description of geodesics in homogeneous spaces
$M=G/K$ whose isotropy representation decomposes into two irreducible summands
$\fr{m}\cong T_{o}M=\fr{m}_1\oplus\fr{m}_2$, with respect to the  (unique up to scalar) diagonal metric.  
His result is the following:

\begin{theorem}\label{Dohira} {\rm (\cite{Do})} Let $M=G/K$ be a homogeneous space with two isotropy summands $\fr{m}=\frak{m}_1\oplus \frak{m}_2$ equipped with the diagonal metric $(1,c)$, where $c>0$.  Assume that the following relations are satisfied:
\begin{equation*} 
\begin{array}{lll} 
[\frak{k},\frak{m}_i]\subset \frak{m}_i \qquad &&[\frak{m}_1,\frak{m}_1]\subset \frak{k} \oplus \frak{m}_2\ \\\\

[\frak{m}_1,\frak{m}_2]\subset \frak{m}_1 \qquad &&[\frak{m}_2,\frak{m}_2]\subset \frak{k}\\
\end{array}
\end{equation*}
  Then the unique geodesic passing through $o$ with $\dot{\gamma}(0)=X_1+X_2$, $X_i\in \frak{m}_i$ is given by
\begin{equation}\label{Doh}
\gamma(t)=\exp t(X_1+cX_2)\exp (1-c)tX_2\cdot o.
\end{equation}
\end{theorem}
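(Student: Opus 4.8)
\emph{The plan} is to verify directly that the proposed curve solves the geodesic equation, after recasting that equation in purely Lie-algebraic form. Write $\gamma(t)=g(t)\cdot o$ with $g(t)=\exp(tA)\exp(tB)$, where $A=X_1+cX_2$ and $B=(1-c)X_2$, and put $V(t)=g(t)^{-1}\dot g(t)\in\fr g$, decomposed as $V=V_{\fr k}+V_{\fr m}$ along $\fr g=\fr k\oplus\fr m$. A standard computation of the Levi--Civita connection along $\gamma$ identifies $\dot\gamma(t)$ with $V_{\fr m}(t)$ transported by $g(t)$, and shows that $\gamma$ is a geodesic if and only if
\[
\dot V_{\fr m}+U(V_{\fr m},V_{\fr m})+[V_{\fr k},V_{\fr m}]_{\fr m}=0,
\]
where $U\colon\fr m\times\fr m\to\fr m$ is the symmetric map determined by $2\langle U(X,Y),Z\rangle=\langle[Z,X]_{\fr m},Y\rangle+\langle[Z,Y]_{\fr m},X\rangle$. (For $g(t)=\exp(tX)$ this recovers the usual homogeneous-geodesic condition $\langle[X,Z]_{\fr m},X_{\fr m}\rangle=0$.) Since geodesics are determined by their initial velocity and here $V(0)=A+B=X_1+X_2$ lies entirely in $\fr m$, we will have $\dot\gamma(0)=X_1+X_2$, so it suffices to check the displayed identity for all $t$.

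The second step is to compute $V(t)$ in closed form. Because $[A,A]=0$ we get $V(t)=\Ad(\exp(-tB))(A+B)=\exp(-t\,\ad_B)(X_1+X_2)$, with $\ad_B=(1-c)\,\ad_{X_2}$. Now I would exploit the bracket relations: $\ad_{X_2}X_2=0$, while $[\fr m_2,\fr m_1]\subset\fr m_1$ forces every iterate $(\ad_{X_2})^k X_1$ to stay in $\fr m_1$. Hence
\[
V(t)=P_1(t)+X_2,\qquad P_1(t):=\Ad(\exp(-tB))X_1\in\fr m_1 .
\]
The crucial consequence is that $V$ never leaves $\fr m$, so $V_{\fr k}=0$ and the isotropy term $[V_{\fr k},V_{\fr m}]_{\fr m}$ drops out; the geodesic equation collapses to $\dot V+U(V,V)=0$.

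The third step is the verification of this reduced identity. Differentiating, $\dot V=\dot P_1=-(1-c)[X_2,P_1]\in\fr m_1$. Expanding $U(V,V)=U(P_1,P_1)+2U(P_1,X_2)+U(X_2,X_2)$, I would use the remaining relations together with the $\Ad(G)$-invariance of the background form $Q$ (with respect to which the metric is $(1,c)$). The relation $[\fr m_2,\fr m_2]\subset\fr k$ gives $U(X_2,X_2)=0$, and invariance gives $\langle U(P_1,P_1),Z\rangle=Q([Z,P_1],P_1)=Q(Z,[P_1,P_1])=0$, so $U(P_1,P_1)=0$. Finally, using $[\fr m_1,\fr m_2]\subset\fr m_1$ and $[\fr m_1,\fr m_1]\subset\fr k\oplus\fr m_2$, the two terms defining $U(P_1,X_2)$ combine (the factor $c$ from the $\fr m_2$-pairing against $-1$ from invariance) to yield exactly $2U(P_1,X_2)=(1-c)[X_2,P_1]$. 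Thus $\dot V+U(V,V)=-(1-c)[X_2,P_1]+(1-c)[X_2,P_1]=0$, as required.

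\emph{The main obstacle} is twofold: first, correctly establishing (and pinning down the sign conventions in) the algebraic geodesic equation for a product of exponentials, where the $\fr k$-part of $V$ would in general contribute a rotation term $[V_{\fr k},\cdot]$ — the point being that here it vanishes; and second, the careful bookkeeping of which bracket components survive in each $U$-term, since the exact cancellation hinges on the factors $c$ and $1-c$ produced by the metric scaling and on the relation $[\fr m_1,\fr m_2]\subset\fr m_1$ forcing $\dot V$ and $U(P_1,X_2)$ to lie in the same summand $\fr m_1$. A conceptual cross-check, which I expect to streamline the argument, is that $\fr h:=\fr k\oplus\fr m_2$ is a subalgebra (by $[\fr m_2,\fr m_2]\subset\fr k$), giving a tower $K\subset H\subset G$ and a Riemannian submersion $G/K\to G/H$; the factor $\exp((1-c)tX_2)$ is precisely the fiber correction, which is the viewpoint the paper develops later.
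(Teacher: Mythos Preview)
The paper does not itself prove this statement; Theorem~\ref{Dohira} is quoted from Dohira's paper \cite{Do} as motivation for the three-summand result. Your argument is correct and self-contained. The decisive observation is that $V(t)=g(t)^{-1}\dot g(t)=P_1(t)+X_2$ stays entirely in $\fr m$ (because $[\fr m_2,\fr m_1]\subset\fr m_1$ keeps every iterate $(\ad_{X_2})^kX_1$ in $\fr m_1$), so the isotropy term $[V_{\fr k},V_{\fr m}]$ vanishes and the geodesic equation collapses to $\dot V+U(V,V)=0$; your computation $2U(P_1,X_2)=(1-c)[X_2,P_1]$ then gives exactly the needed cancellation with $\dot P_1=-(1-c)[X_2,P_1]$. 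One minor point: when you argue $U(P_1,P_1)=0$ and $U(X_2,X_2)=0$ you should split into $Z\in\fr m_1$ and $Z\in\fr m_2$, since both bracket relations and the orthogonality of the summands are used, not just the single relation you cite in each case.

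It is still worth comparing your route with the paper's proof of its own analogous result, Theorem~\ref{final}. There the authors do not pass through the $U$-tensor or the left-logarithmic derivative $V$; instead they work with the scalar function $G_W(t)=g(\hat W_{\gamma(t)},\nabla_{\dot\gamma}\dot\gamma)$ obtained directly from Koszul's formula (Propositions~\ref{nikos} and~\ref{generalform}) and verify $G_W\equiv 0$ by expanding everything in the Killing form and cancelling term by term. Both approaches encode the same Koszul identity, but yours is more structural: it makes transparent \emph{why} a two-exponential ansatz suffices (the $\fr k$-component of the logarithmic derivative is identically zero), whereas in the paper's computation this mechanism is buried inside the relation $TX_3=X_3$ and the ensuing Killing-form cancellations. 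Your closing remark about the subalgebra $\fr h=\fr k\oplus\fr m_2$ and the associated Riemannian submersion is precisely the viewpoint the paper develops in Section~5.
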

Every generalized flag manifold with two isotropy summands satisfies the condition of Theorem \ref{Dohira}.  These spaces have been classified in terms of painted Dynkin diagrams in \cite{AC}, therefore using Dohira's result it is possible to describe all geodesics of the form (\ref{Doh}) in these spaces.
Observe that the geodesics
(\ref{Doh}) are of the form
\begin{equation}\label{2exp}
\gamma (t)=\exp (tX)\exp (tY) \cdot o,\quad X\in\fr{m}_1, Y\in\fr{m}_2.
\end{equation}

Therefore, it is natural to try to extend such a  result to homogeneous spaces $M=G/K$ whose isotropy representation decomposes into three (or more) irreducible summands, i.e.
$$
\fr{m}\cong T_{o}M=\fr{m}_1\oplus\fr{m}_2\oplus\fr{m}_3.
$$
To simplify our study we will assume  that $G$-invariant metrics $g$ on these spaces are
determined by
 $\Ad(K)$-invariant inner products on $\fr{m}$ of the form
 \begin{equation}\label{metric}
 \langle \ ,\ \rangle=\lambda _1\left.(-B)\right|_{\fr{m}_1} +\lambda _2\left.(-B)\right|_{\fr{m}_2}
 +\lambda _3\left.(-B)\right|_{\fr{m}_3}, \quad \lambda_i>0,
\end{equation}
 where $B$ is  the Killing form of $\fr{g}$.  
 For large classes of homogeneous spaces this is a reasonable assumption.
 We denote these metrics by $(\lambda_1,\lambda_2,\lambda_3)$.  
 
 In the present paper we  study geodesics of the form
\begin{equation}\label{3exp}
\gamma (t)=\exp (tX)\exp (tY)\exp (tZ)\cdot o,\quad X\in\fr{m}_1, Y\in\fr{m}_2, Z\in\fr{m}_3
\end{equation}
with respect to the metrics (\ref{metric}) for  the {\it generalized Wallach spaces}.  
These are homogeneous spaces
$M=G/K$ whose isotropy representation $\frak{m}$ decomposes into three $\operatorname{Ad}(K)$-invariant irreducible submodules 
$\frak{m}=\frak{m}_1\oplus\frak{m}_2\oplus\frak{m}_3$ with the property
$[\frak{m}_i, \frak{m}_i]\subset\frak{k}$.
They were introduced by
Yu. G. Nikonorov, E. D. Rodionov and V. V. Slavskii in \cite{NRS} and they were earlier referred to in the literature as {\it three-locally-symmetric spaces} (e.g. \cite{LNF}), because they generalize the property $[\frak{m}, \frak{m}]\subset\frak{k}$ for a locally symmetric homogeneous space $G/K$.
Notice that some of the submodules $\frak{m}_i$ could be equivalent in general, but here {\it we assume that they are pairwise non equivalent}.
Despite their simple description a complete  classification of these spaces was given only very recently by Yu.G. Nikonorov in \cite{Ni2} and independently by Z. Chen, Y. Kang, K. Liang in \cite{Ch-Ka-Li}.

Our analysis leads us to the conclusion that  the form  (\ref{3exp}) of the geodesics we are looking for imposed  restrictions to the $G$-invariant metrics (\ref{metric}).
We prove the following:

 \begin{theorem}\label{maintheorem} Let $M=G/K$ be a generalized Wallach space.
If the $G$-invariant metric  (\ref{metric}) has one of the forms 
 $(1,1,c)$, $(1,c,1)$ or $(c,1,1)$ $(c>0)$, then the unique geodesic $\gamma (t)$ passing through $o$ with 
$\dot{\gamma}(0)=X_1+X_2+X_3 \in T_o(G/K)$, $X_i\in \frak{m}_i$, is given by

\begin{eqnarray}\label{main}
\gamma(t)&=&\exp t(X_1+X_2+cX_3)\exp (t(1-c)X_3) \cdot o,\ \mbox{or}\\
\gamma(t)&=&\exp t(X_1+cX_2+X_3)\exp (t(1-c)X_2) \cdot o, \ \mbox{or}\\
\gamma(t)&=&\exp t(cX_1+X_2+X_3)\exp (t(1-c)X_1) \cdot o
\end{eqnarray}
respectively.  
\end{theorem}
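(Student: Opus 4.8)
The plan is to prove the theorem by reducing it to Dohira's Theorem~\ref{Dohira}: the special form of the metric lets us regroup the three isotropy summands into two, to which Theorem~\ref{Dohira} applies directly. I treat the metric $(1,1,c)$ in detail; the remaining two cases are obtained by permuting the indices.

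First I would record the bracket relations of a generalized Wallach space that go beyond the defining one $[\fr{m}_i,\fr{m}_i]\subset\fr{k}$. Since each $\fr{m}_i$ is $\Ad(K)$-invariant we have $[\fr{k},\fr{m}_i]\subset\fr{m}_i$, and since the summands are pairwise inequivalent they are mutually $B$-orthogonal. Testing $[X,Y]$ with $X\in\fr{m}_1$, $Y\in\fr{m}_2$ against $\fr{k}$, $\fr{m}_1$ and $\fr{m}_2$, and moving one factor across the bracket by invariance of $B$, each pairing lands in one of $[\fr{k},\fr{m}_2]\subset\fr{m}_2$, $[\fr{m}_1,\fr{m}_1]\subset\fr{k}$, $[\fr{m}_2,\fr{m}_2]\subset\fr{k}$, which are orthogonal to the remaining factor; hence $[\fr{m}_1,\fr{m}_2]\subset\fr{m}_3$, and symmetrically $[\fr{m}_i,\fr{m}_j]\subset\fr{m}_k$ for pairwise distinct $i,j,k$.

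Next I set $\fr{n}:=\fr{m}_1\oplus\fr{m}_2$ and pass to the two-summand decomposition $\fr{m}=\fr{n}\oplus\fr{m}_3$. Using the relations above I would check that this satisfies the hypotheses of Theorem~\ref{Dohira} with $\fr{n}$ in the role of the first summand and $\fr{m}_3$ in the role of the second: indeed $[\fr{n},\fr{n}]\subset\fr{k}\oplus\fr{m}_3$ (because $[\fr{m}_1,\fr{m}_1],[\fr{m}_2,\fr{m}_2]\subset\fr{k}$ and $[\fr{m}_1,\fr{m}_2]\subset\fr{m}_3$), $[\fr{n},\fr{m}_3]\subset\fr{n}$ (because $[\fr{m}_1,\fr{m}_3]\subset\fr{m}_2$ and $[\fr{m}_2,\fr{m}_3]\subset\fr{m}_1$), and $[\fr{m}_3,\fr{m}_3]\subset\fr{k}$. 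The decisive point is that, the weights of $\fr{m}_1$ and $\fr{m}_2$ both being equal to $1$ and these summands being $B$-orthogonal, the inner product \eqref{metric} restricts on $\fr{n}$ to the single scalar multiple $\left.(-B)\right|_{\fr{n}}$; hence $(1,1,c)$ is exactly the diagonal metric $(1,c)$ for $\fr{m}=\fr{n}\oplus\fr{m}_3$. This is precisely where the hypothesis on the metric is used, and it explains why two of the three weights must coincide.

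Finally, decomposing the initial velocity as $X_1+X_2+X_3=(X_1+X_2)+X_3$ with $X_1+X_2\in\fr{n}$ and $X_3\in\fr{m}_3$, I would apply Theorem~\ref{Dohira} to obtain the unique geodesic
\begin{equation*}
\gamma(t)=\exp t\big((X_1+X_2)+cX_3\big)\exp (t(1-c)X_3)\cdot o=\exp t(X_1+X_2+cX_3)\exp (t(1-c)X_3)\cdot o,
\end{equation*}
which is the first formula \eqref{main}; uniqueness is inherited from Theorem~\ref{Dohira}. The cases $(1,c,1)$ and $(c,1,1)$ follow verbatim by grouping $\fr{m}_1\oplus\fr{m}_3$ with $\fr{m}_2$, and $\fr{m}_2\oplus\fr{m}_3$ with $\fr{m}_1$, respectively. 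The only delicate point is the legitimacy of applying Theorem~\ref{Dohira} to the \emph{reducible} summand $\fr{n}$: Dohira's result uses only the three bracket inclusions and the diagonal form of the metric, neither of which requires the first summand to be irreducible, so the reduction goes through.
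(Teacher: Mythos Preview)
Your proof is correct and takes a genuinely different route from the paper. The paper proves the theorem by direct computation: it introduces the function $G_W(t)=g(\hat{W}_{\gamma(t)},\nabla_{\dot\gamma(t)}\dot\gamma(t))$ whose vanishing characterizes geodesics (Proposition~\ref{nikos}), derives an explicit expression for it in terms of $T=\Ad(\exp(-tZ)\exp(-tY))$ (Proposition~\ref{generalform}), and then checks by hand that $G_W(t)\equiv 0$ for the claimed curve using the bracket relations~(\ref{brackets}),~(\ref{rel3}) and the $\ad$-skew symmetry of $B$ (Theorem~\ref{final}). Your argument instead notices that when two of the three weights coincide, the corresponding summands can be merged into a single $\Ad(K)$-invariant subspace $\fr{n}$ on which the metric is a scalar multiple of $-B$, and that the pair $(\fr{n},\fr{m}_3)$ satisfies exactly Dohira's bracket hypotheses; the result then falls out of Theorem~\ref{Dohira}. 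Your route is shorter and more conceptual, and the paper in fact acknowledges this viewpoint \emph{a posteriori} in Section~5 via the submersion picture. What the paper's computational machinery buys in return is Proposition~\ref{restriction}: starting from the three-exponential ansatz and imposing $G_W(0)=\dot G_W(0)=0$ \emph{forces} two of the $\lambda_i$ to coincide, a converse-type statement that your reduction does not yield. Your caveat about irreducibility is well placed: Dohira's argument uses only the listed bracket inclusions and the diagonal form of the metric, so it applies to the reducible $\fr{n}$ without change.
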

As seen in this theorem, even though we look for geodesics which are orbits of products of three exponential factors, we finally obtain  geodesics which are orbits of   products of two exponential factors.  We give an explanation of this in the last section of the paper.

The above description of geodesics can be applied to various classes of homogeneous spaces, for example for certain
 generalized flag manifolds  with three isotropy summands.  These spaces have been classified   by M. Kimura in \cite{Ki} and  can be split into two classes, depending on whether the
 second Betti number of $M$ is $b_2(M)=1$ or $b_2(M)=2$. Equivalently, their painted Dynkin diagram contains one or two black roots respectively.  They are those with   $b_2(M)=2$ that
satisfy the conditions of Theorem \ref{maintheorem}, which in this case implies the following:
  
\begin{corol} Let $M=G/K$ be a generalized flag manifold with three isotropy summands and with second Betti number $b_2(M)=2$.  Then the geodesics are described by Theorem \ref{maintheorem}. 
\end{corol}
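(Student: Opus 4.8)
The plan is to show that the corollary is a direct specialization of Theorem \ref{maintheorem}: I would verify that every generalized flag manifold $M=G/K$ with three isotropy summands and $b_2(M)=2$ is itself a generalized Wallach space whose three summands are pairwise non-equivalent, so that both structural hypotheses of Theorem \ref{maintheorem} hold and its conclusion applies verbatim to the metrics $(1,1,c)$, $(1,c,1)$, $(c,1,1)$. Thus the whole statement reduces to checking, against Kimura's classification \cite{Ki}, that the bracket relations $[\fr{m}_i,\fr{m}_i]\subset\fr{k}$ are satisfied.

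First I would recall the description of the isotropy summands of a flag manifold via $\fr{t}$-roots: if $\fr{t}$ denotes the center of the isotropy subalgebra, then $\fr{m}$ decomposes into the $\fr{t}$-root submodules $\fr{m}_\xi$, one for each non-zero restriction $\xi$ of a root to $\fr{t}$, and distinct $\fr{t}$-roots yield inequivalent irreducible submodules. Since $b_2(M)=2$ the space $\fr{t}$ is two-dimensional, and the three-summand case corresponds (by \cite{Ki}) to exactly three $\fr{t}$-roots arranged in the additive pattern $\xi_1,\ \xi_2,\ \xi_1+\xi_2$, with $\fr{m}_1=\fr{m}_{\xi_1}$, $\fr{m}_2=\fr{m}_{\xi_2}$, $\fr{m}_3=\fr{m}_{\xi_1+\xi_2}$. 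Pairwise non-equivalence of the $\fr{m}_i$ is then automatic from the distinctness of these three $\fr{t}$-roots.

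The key step is the graded bracket rule $[\fr{m}_\xi,\fr{m}_\eta]\subset\fr{m}_{\xi+\eta}$, with the convention $\fr{m}_0=\fr{k}$ and $\fr{m}_\zeta=0$ whenever $\zeta$ is not a $\fr{t}$-root. Applying it with $\xi=\eta=\xi_i$, I would observe that none of $2\xi_1$, $2\xi_2$, $2(\xi_1+\xi_2)$ lies in the set $\{\xi_1,\xi_2,\xi_1+\xi_2\}$ of $\fr{t}$-roots, whence $[\fr{m}_i,\fr{m}_i]\subset\fr{k}$ for each $i=1,2,3$; this is exactly the generalized Wallach condition. With this and the non-equivalence of the summands in hand, Theorem \ref{maintheorem} applies and gives the stated geodesics. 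The main obstacle is the $\fr{t}$-root bookkeeping: one must confirm that Kimura's painted Dynkin diagrams with two black roots indeed produce the additive pattern $\{\xi_1,\xi_2,\xi_1+\xi_2\}$, and not the $b_2(M)=1$ pattern $\{\xi,2\xi,3\xi\}$, for which $2\xi$ \emph{is} a $\fr{t}$-root, forcing $[\fr{m}_1,\fr{m}_1]\subset\fr{m}_2\neq\fr{k}$ and breaking the generalized Wallach property. Once the correct grading is pinned down, the verification is immediate and the corollary follows.
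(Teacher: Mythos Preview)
Your proposal is correct and follows the same underlying approach as the paper. The paper does not give a standalone proof of this corollary; it simply cites Kimura's classification \cite{Ki} and asserts that the $b_2(M)=2$ flag manifolds satisfy the hypotheses of Theorem~\ref{maintheorem}, while Example~4.5 illustrates the mechanism in the $E_6/(U(1)\times U(1)\times \operatorname{Spin}(8))$ case via the coefficients $(k_1,k_5)$ of the two black simple roots---which is exactly your $\fr t$-root grading in different notation. Your write-up is in fact more complete than the paper's, since you spell out why the graded bracket rule forces $[\fr m_i,\fr m_i]\subset\fr k$ and why the summands are pairwise inequivalent, and you correctly flag that the $b_2(M)=1$ pattern $\{\xi,2\xi,3\xi\}$ fails the Wallach condition.
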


Another class of homogeneous spaces for which Theorem \ref{maintheorem} can be applied, are the real Stiefel manifolds $V_2\mathbb{R}^{n+2}=SO(n+2)/SO(n)$ of orthonormal $2$-frames in $\mathbb{R}^{n+2}$.
Their isotropy representation decomposes into  three irreducible summands, two of which are equivalent.
However, it was proved by M. Kerr in \cite{Ke} that any $SO(n+2)$-invariant metric can be represented by an inner product of the form (\ref{metric}), hence we obtain the following:

\begin{corol}  The geodesics in the Stiefel manifolds $SO(n+2)/SO(n)$ are described by Theorem \ref{maintheorem}.
\end{corol}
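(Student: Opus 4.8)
The plan is to verify directly that $SO(n+2)/SO(n)$ meets the hypotheses of Theorem \ref{maintheorem} and then quote that theorem. First I would fix the reductive data: write $\mathfrak{g}=\mathfrak{so}(n+2)$ as the antisymmetric $(n+2)\times(n+2)$ matrices, place $\mathfrak{k}=\mathfrak{so}(n)$ in the upper-left block, and split the complement as $\mathfrak{m}=\mathfrak{m}_1\oplus\mathfrak{m}_2\oplus\mathfrak{m}_3$, where $\mathfrak{m}_1,\mathfrak{m}_2$ are the two off-diagonal ``column'' blocks (each $\cong\mathbb{R}^n$) and $\mathfrak{m}_3$ is the single $\mathfrak{so}(2)$-entry coupling the last two coordinates ($\cong\mathbb{R}$). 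A short block-matrix computation then establishes the generalized Wallach relations $[\mathfrak{m}_i,\mathfrak{m}_i]\subset\mathfrak{k}$ together with $[\mathfrak{m}_1,\mathfrak{m}_2]\subset\mathfrak{m}_3$, $[\mathfrak{m}_2,\mathfrak{m}_3]\subset\mathfrak{m}_1$ and $[\mathfrak{m}_3,\mathfrak{m}_1]\subset\mathfrak{m}_2$; these are precisely the bracket relations on which the proof of Theorem \ref{maintheorem} rests.

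The genuine difficulty is that $\mathfrak{m}_1$ and $\mathfrak{m}_2$ are equivalent $\Ad(K)$-modules (both the standard representation of $SO(n)$), whereas Theorem \ref{maintheorem} was proved under the standing assumption that the three summands are pairwise inequivalent. The only place that assumption enters is in guaranteeing, via Schur's lemma, that every $G$-invariant metric is diagonal of the form (\ref{metric}); with $\mathfrak{m}_1\cong\mathfrak{m}_2$ there is a priori an extra off-diagonal coupling, so the space of invariant inner products is strictly larger. This is exactly the gap closed by Kerr's theorem \cite{Ke}: the normalizer $N_{SO(n+2)}(SO(n))$ contains an $SO(2)$ (rotation of the $2$-frame) which acts by isometries, rotating $\mathfrak{m}_1$ into $\mathfrak{m}_2$ while fixing $\mathfrak{m}_3$; conjugating by a suitable such element diagonalizes the $2\times 2$ coupling block, so that every $SO(n+2)$-invariant metric is isometric to one of the form (\ref{metric}).

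Once this reduction is in place the corollary is immediate: isometries carry geodesics to geodesics, so it suffices to describe geodesics for the diagonal metrics (\ref{metric}), and for these the bracket relations verified above let the proof of Theorem \ref{maintheorem} run unchanged, producing the two-exponential orbits (\ref{main}). I expect the main obstacle to be not the bracket bookkeeping but precisely this reconciliation of the module equivalence with the hypotheses of Theorem \ref{maintheorem} --- namely, checking carefully that the $SO(2)$-normalizer really acts isometrically on $\mathfrak{m}$, and that after its use the equal coefficients are the ones attached to the two isomorphic summands $\mathfrak{m}_1,\mathfrak{m}_2$, so that the relevant case of Theorem \ref{maintheorem} (equal weights on the equivalent modules) is the one that applies.
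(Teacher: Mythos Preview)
Your approach is essentially the paper's: the paper simply observes that $SO(n+2)/SO(n)$ carries the generalized Wallach bracket structure, cites Kerr \cite{Ke} for the fact that every $SO(n+2)$-invariant metric is (isometric to) a diagonal one of the form (\ref{metric}), and then invokes Theorem \ref{maintheorem}. Your block-matrix verification of $[\mathfrak{m}_i,\mathfrak{m}_i]\subset\mathfrak{k}$ and $[\mathfrak{m}_i,\mathfrak{m}_j]\subset\mathfrak{m}_k$ is more explicit than anything the paper actually writes down, but the logical skeleton is identical, and you have correctly identified that the inequivalence hypothesis enters only through the diagonal form of the metric.

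One correction to your closing paragraph: Kerr's theorem does \emph{not} force the diagonalized metric to have equal weights on the equivalent summands $\mathfrak{m}_1$ and $\mathfrak{m}_2$. The $SO(2)$ in the normalizer rotates away the off-diagonal coupling, but the resulting diagonal metric is a general $(\lambda_1,\lambda_2,\lambda_3)$ with $\lambda_1$ and $\lambda_2$ independent. Hence all three cases of Theorem \ref{maintheorem}---$(1,1,c)$, $(1,c,1)$ and $(c,1,1)$---are available for the Stiefel manifold, not only the one with equal weights on $\mathfrak{m}_1,\mathfrak{m}_2$. Correspondingly, the corollary (like Theorem \ref{maintheorem} itself) describes geodesics only for these special one-parameter families of metrics, not for an arbitrary diagonal $(\lambda_1,\lambda_2,\lambda_3)$; your last sentence suggests you expected a stronger conclusion than is actually being claimed.
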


The paper is organized as follows:
In Section 2 we obtain results about geodesics in any Riemannian homogeneous space $(M=G/K, g)$ which have their own interest.  More precisely,
let $\gamma$ be a curve in $M=G/K$ which is the projection of a curve $\alpha$ on $G$,  let $W\in\frak{m}$ and let $\hat{W}$ be a certain vector field
on $M$ canonically associated to $W$ (cf. Definition \ref{v}) .
We introduce the real function $G_W(t)=g(\hat{W}_{\gamma(t)},\nabla_{\dot{\gamma}(t)}\dot{\gamma}(t))_{\gamma(t)}$ and we prove that $\gamma$ is a geodesic on $M=G/K$ if and only if $G_W(t)=0$ for every vector $W\in \fr{m}$ and every $t\in \mathbb R$ (cf. Proposition \ref{nikos}).
In Section 3 we consider geodesics of the form (\ref{3exp}) and we express the function
$G_W(t)$ in a more convenient form for our calculations (cf. Proposition \ref{generalform}).

In Section 3 we define the generalized Wallach spaces and in Section 4 we
 prove Theorem \ref{maintheorem}.
We proceed as follows:
Let $(M=G/K,g)$ be a generalized Wallach space, where $g$ is a $G$-invariant metric of the form $(1, \lambda_2, \lambda_3)$ (we observe  that for our purposes this is not a restriction).  Let $\gamma$ be the unique geodesic on $M$ of the form (\ref{3exp}) with $\gamma(0)=o\in M$ and $\dot{\gamma}(0)=X_1+X_2+X_3$, $X_i\in \frak{m}_i$.  Then we conclude that
\begin{equation*} 
G_W(0)=0\ \ \mbox{and}\ \  \dot{G}_W(0)=0.
 \end{equation*}
We elaborate on the above relations and we obtain a system of equations for the parameters $\lambda_2, \lambda_3$.  By solving this system we write the vectors $X,Y,Z$ in (\ref{3exp}) as linear expressions  of $X_1,X_2,X_3$, and obtain some restrictions on the parameters $\lambda_i$.  Therefore, by assuming that a curve of the form (\ref{3exp}) is a geodesic, where $X,Y,Z$ are expressed as linear combinations of $X_1,X_2,X_3$, we obtain all possible forms of $G$-invariant metrics and their corresponding geodesics, as stated  in  Theorem \ref{maintheorem}.  Then it is straightforward  to show that each curve is indeed a geodesic on $(M=G/K,g)$.
Finally, in Section 5 we interpret our results by using Riemannian submersions of the generalized Wallach spaces over locally symmetric spaces.

\medskip
{\bf Acknowledgements.} 
The first author had useful discussion with Professor Yu.G. Nikonorov.
The work was supported by Grant $\# E.037$ from the Research Committee of the University of Patras
(Programme K. Karatheodori).  It was completed while the first author was a visiting scholar at Tufts University during Fall 2014.  The authors thank the referee for a suggestion which led to the formulation of Proposition \ref{remark}.

\section{Geodesics in Homogeneous Spaces}

\subsection{A description of geodesics}

Let $M=G/K$ be a homogeneous space where $G$ is a Lie group and $K$ is the isotropy subgroup of a point $o \in M$.  Let $\frak{g}$ and $\frak{k}$ denote the Lie algebras of $G$ and $K$ respectivelly.  We assume that $M$ is a reductive homogeneous space, that is there exists a decomposition $\frak{g}=\frak{k}\oplus\frak{m}$ where $\frak{m}$ is the orthogonal complement of $\frak{k}$ with respect to an inner product on $\frak{g}$ and $[\frak{k},\frak{m}]\subset \frak{m}$.\\
Let $\pi:G\rightarrow G/K$ be the natural projection which is a submersion.  For $g\in G$ we denote by $g \cdot o$ the action of $g$ on the point $o \in M$, that is $g \cdot o=\pi(g)$.  We identify $\frak{m}$ with the tangent space $T_o(G/K)$ of $M$ at $o$, by identifying each $X\in \frak{m}$ with $d\pi_e(X)\in T_o(G/K)$.  Let $X\in \frak{g}$.  For convenience we will use the notation
\begin{equation} X_{\frak{m}}=d\pi_e(X).\end{equation}
Recall the diffeomorphism $c_g:G\rightarrow G$ with $c_g(h)=ghg^{-1}$.  The adjoint representation of $G$ in $\frak{g}$ is given by $\operatorname{Ad}:G\rightarrow \operatorname{Aut}(\frak{g})$ with $\operatorname{Ad}(g)X=(dc_g)_e(X)$, while the adjoint representation of $\frak{g}$ is given by $\operatorname{ad}:\frak{g}\rightarrow \operatorname{End}(\frak{g})$ with $\operatorname{ad}(X)Y=(d\operatorname{Ad})_e(X)Y$.  The following well known relation will be used throughout this paper:
\begin{equation*}\left.\frac{d}{dt}\right|_{t=0}\operatorname{Ad}(\exp(tX))Y=\operatorname{ad}(X)Y=[X,Y],\quad X,Y\in \frak{g}.\end{equation*}    
 
\noindent  The {\it isotropy representation} of $G/K$, $\operatorname{Ad}^{G/K}:K\rightarrow \operatorname{Aut}(\frak{m})$, is given by 
$\operatorname{Ad}^{G/K}(k)X=(d\tau_k)_oX$, where $k\in K$, $X\in \frak{m}$ and $\tau_k:G/K\rightarrow G/K$ is the left translation in $G/K$, that is $\tau_k(\pi(p))=\pi(kp)$, $p\in G$.
A $G$-invariant metric $g$ on $G/K$ is such that $\tau_k$ is an isometry on $G/K$.  
In this case $(G/K, g)$ is called a {\it Riemannian homogeneous space}.  There is a bijection between $G$-invariant metrics $g$ on $G/K$ and $\operatorname{Ad}(K)$-invariant inner products $\langle \ , \rangle$ on $\frak{m}$.
Let $g\in G$ and $X\in \frak{g}$.  We define the vector fields $X^L$ and $X^R$ by

\begin{eqnarray}\label{leftinv} X^L_g=(dL_g)_e(X) \qquad \mbox{and} \qquad
\label{rightinv} X^R_g=(dR_g)_e(X),
\end{eqnarray}
which are left and right invariant respectivelly.\\
The following result is stated in \cite{Do} without proof, so we take the chance to give a proof here.
\begin{lemma} (\cite{Do})  Let $X,Y\in \frak{g}$ and $g\in G$.  Then the following relations are valid:
\noindent \begin{eqnarray}\label{doh1}&(d\tau_{g^{-1}})_{g\cdot o}(d\pi_g)(X^L_g) &=X_{\frak{m}}\\ 
\label{doh2}&(d\tau_{g^{-1}})_{g\cdot o}(d\pi_g)(X^R_g) &=(\operatorname{Ad}(g^{-1})X)_{\frak{m}}\\
\label{doh3}&[X^L, Y^L]_g &=[X,Y]^L_g\\ 
\label{doh4}&[X^L,Y^R]_g &=0. 
\end{eqnarray}
\end{lemma}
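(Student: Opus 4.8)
The plan is to treat the four identities in two groups. Relations (\ref{doh1}) and (\ref{doh2}) concern the effect of the ``de-translated projection'' $\tau_{g^{-1}}\circ\pi$ on the invariant vector fields, and I would prove them by reducing everything to the chain rule together with two structural identities on $G$; relations (\ref{doh3}) and (\ref{doh4}) are the classical statements that the bracket of left-invariant fields is left-invariant and that left-invariant and right-invariant fields commute.

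First I would record the \emph{equivariance} of the projection under left translations. Since $\tau_a(\pi(p))=\pi(ap)=\pi(L_a(p))$, and since $\tau_a$ is well defined on $G/K$ for \emph{every} $a\in G$ (not only for $a\in K$, because $\pi(ap)$ depends only on $\pi(p)$), one has the identity of maps $\tau_a\circ\pi=\pi\circ L_a$; in particular $\tau_{g^{-1}}\circ\pi=\pi\circ L_{g^{-1}}$. For (\ref{doh1}) I would then write $X^L_g=(dL_g)_e(X)$ and apply the chain rule twice:
\begin{align*}
(d\tau_{g^{-1}})_{g\cdot o}(d\pi_g)\big((dL_g)_e X\big)
&=d\big(\tau_{g^{-1}}\circ\pi\circ L_g\big)_e(X)\\
&=d\big(\pi\circ L_{g^{-1}}\circ L_g\big)_e(X)=d\pi_e(X)=X_{\frak{m}},
\end{align*}
using $L_{g^{-1}}\circ L_g=\operatorname{id}$. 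For (\ref{doh2}) the same computation with $X^R_g=(dR_g)_e(X)$ produces $d\big(\pi\circ L_{g^{-1}}\circ R_g\big)_e(X)$, and here the relevant structural identity is $L_{g^{-1}}\circ R_g=c_{g^{-1}}$, conjugation by $g^{-1}$; since $(dc_{g^{-1}})_e=\operatorname{Ad}(g^{-1})$ this yields $d\pi_e(\operatorname{Ad}(g^{-1})X)=(\operatorname{Ad}(g^{-1})X)_{\frak{m}}$.

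For (\ref{doh3}) I would invoke the standard fact that each left-invariant field $X^L$ is $L_g$-related to itself, so that naturality of the Lie bracket under pushforward forces $[X^L,Y^L]$ to be left-invariant; evaluating at $e$ and using that $[X,Y]:=[X^L,Y^L]_e$ is precisely the Lie algebra bracket then gives $[X^L,Y^L]_g=[X,Y]^L_g$. For (\ref{doh4}) I would argue at the level of flows: the flow of $X^L$ is the right translation $R_{\exp(tX)}$ and the flow of $Y^R$ is the left translation $L_{\exp(sY)}$, and these commute because left and right translations always commute ($L_aR_b=R_bL_a$). Since two vector fields Lie-commute exactly when their flows commute, $[X^L,Y^R]=0$.

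None of the four steps is genuinely hard; the only points requiring care are bookkeeping ones, namely keeping track of the base points so the chain rule composes legitimately, and --- the single real subtlety --- noting at the outset that $\tau_{g^{-1}}$ makes sense for arbitrary $g\in G$, which is what makes the equivariance identity $\tau_{g^{-1}}\circ\pi=\pi\circ L_{g^{-1}}$, and with it the whole reduction of (\ref{doh1})--(\ref{doh2}), available. I expect the recognition of the two structural identities $L_{g^{-1}}\circ L_g=\operatorname{id}$ and $L_{g^{-1}}\circ R_g=c_{g^{-1}}$ to be the crux that makes the first two relations fall out cleanly.
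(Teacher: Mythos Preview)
Your proposal is correct. For (\ref{doh1})--(\ref{doh3}) your argument is essentially the same as the paper's, only packaged differently: the paper verifies (\ref{doh2}) by pushing the curve $\exp(tX)$ through $\tau_{g^{-1}}\circ\pi\circ R_g$ and recognising the result as $\pi\circ c_{g^{-1}}(\exp(tX))$, which is exactly your identity $L_{g^{-1}}\circ R_g=c_{g^{-1}}$ written with curves instead of the abstract chain rule; (\ref{doh1}) is dismissed as analogous, and (\ref{doh3}) is attributed, as you do, to $L_g$-relatedness of left-invariant fields.

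The one genuine difference is (\ref{doh4}). The paper proves it by a direct double-derivative computation: expanding $X^L_g(Y^Rf)$ and $Y^R_g(X^Lf)$ separately, each reduces to $\left.\dfrac{d}{ds}\right|_0\left.\dfrac{d}{dt}\right|_0 f(\exp(tY)\,g\,\exp(sX))$, so the two terms cancel. Your argument via flows --- $X^L$ flows by right translations, $Y^R$ by left translations, and these commute --- is cleaner and more conceptual, while the paper's version has the virtue of being self-contained and not invoking the (standard but nontrivial) equivalence between commuting flows and vanishing Lie bracket. Either route is perfectly acceptable here.
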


\begin{proof}  We will  show relation (\ref{doh2}).  It is
\begin{equation*}
\begin{array}{lll}
&& \displaystyle{(d\tau_{g^{-1}})_{g\cdot o}(d\pi_g)(X^R_g)} =\displaystyle{\left.\frac{d}{dt}\right|_{t=0}(\tau_{g^{-1}}\circ\pi\circ R_g)(\exp(tX))}\ \\\\

&&\ \ \ =\displaystyle{\left.\frac{d}{dt}\right|_{t=0}(g^{-1}\exp(tX)g)\cdot o}=\displaystyle{\left.\frac{d}{dt}\right|_{t=0}(c_{g^{-1}}(\exp(tX)))\cdot o}\ \\\\

&&\ \ \ = \displaystyle{\left.\frac{d}{dt}\right|_{t=0}(\exp((dc_{g^{-1}})_e(tX)))\cdot o}=\displaystyle{\left.\frac{d}{dt}\right|_{t=0}(\exp(t(dc_{g^{-1}})_eX))\cdot o}\ \\\\

&&\ \ \ =\displaystyle{\left.\frac{d}{dt}\right|_{t=0}(\exp(t\operatorname{Ad}(g^{-1})X))\cdot o}=\displaystyle{\left.\frac{d}{dt}\right|_{t=0}\pi(\exp(t\operatorname{Ad}(g^{-1})X))}\ \\\\

&&\ \ \ =(d\pi_e)(\operatorname{Ad}(g^{-1})X)=(\operatorname{Ad}(g^{-1})X)_{\frak{m}}.
\end{array}
\end{equation*}

To show relation (\ref{doh4}), let $f:M\rightarrow \mathbb R$ be a smooth function.  Then $[X^L,Y^R]_gf=X^L_g(Y^Rf)-Y^R_g(X^Lf)$.  We have
\begin{equation*}
\begin{array}{lll}
&& \displaystyle{X^L_g(Y^Rf)} =(dL_g)_e(X)(Y^Rf)=X((Y^Rf)\circ L_g)\ \\\\

&&\ \ \ =\displaystyle{\left.\frac{d}{ds}\right|_{s=0}((Y^Rf)\circ L_g)(\exp(sX))}=\displaystyle{\left.\frac{d}{ds}\right|_{s=0}(Y^Rf)_{g\exp(sX)}}\ \\\\

&&\ \ \ =\displaystyle{\left.\frac{d}{ds}\right|_{s=0}Y^R_{g\exp(sX)}f}=\displaystyle{\left.\frac{d}{ds}\right|_{s=0}(dR_{g\exp(sX)})_e(Y)f}\ \\\\

&&\ \ \ =\displaystyle{\left.\frac{d}{ds}\right|_{s=0}Y(f\circ R_{g\exp(sX)})}=\displaystyle{\left.\frac{d}{ds}\right|_{s=0}\left.\frac{d}{dt}\right|_{t=0}(f\circ R_{g\exp(sX)})(\exp(tY))}\ \\\\

&&\ \ \ =\displaystyle{\left.\frac{d}{ds}\right|_{s=0}\left.\frac{d}{dt}\right|_{t=0}f(\exp(tY)g\exp(sX))}.
\end{array}
\end{equation*}
    
The term $Y^R_g(X^Lf)$ is also equal to $\displaystyle{\left.\frac{d}{ds}\right|_{s=0}\left.\frac{d}{dt}\right|_{t=0}f(\exp(tY)g\exp(sX))}$, thus relation (\ref{doh4}) follows.\\
The proof for relation (\ref{doh1}) is similar to that of (\ref{doh2}).  Relation (\ref{doh3}) holds by the $L_g$-equivalence of left invariant vector fields.
\end{proof} 
 
Let $\alpha:I\subset \mathbb R\rightarrow G$ be a smooth curve in $G$.  Then $\gamma=\pi \circ \alpha:I\rightarrow G/K$ is a smooth curve in $M=G/K$ and $\dot{\gamma}$ is a vector field along $\gamma$.  We extend $\dot{\gamma}$ to a vector field locally in $M$ as follows.
First note that the vector field $\dot{\alpha}$ along $\alpha$ assigns to each point $\alpha(t_0)$ the tangent vector $\dot{\alpha}_{\alpha(t_0)}=\dot{\alpha}(t_0)$.  
Then, since $d\pi_e |_{\frak{m}}:\frak{m}\rightarrow T_o(G/K)$ is a vector space isomorphism, we have that
$\pi:\exp \frak{m}\rightarrow G/K$ is a local diffeomorphism.  Thus, there exists an open neighborhood $U$ of $e$ in $\exp \frak{m}$ such that $\pi |_U:U\rightarrow \pi(U)$ is a diffeomorphism.

 Since $L_{\alpha(t)}$ is a homeomorphism and $U$ is an open neighborhood of $e$, the set $U_{\alpha(t)}=\left\{\alpha(t)g: g\in U\right\}=L_{\alpha(t)}(U)$ is an open neighborhood of $\alpha(t)$ in $\alpha(t)\exp \frak{m}$.  Also, $\pi(U_{\alpha(t)})$ is an open neighborhood of $\gamma(t)$ in $\pi(\alpha(t)\exp \frak{m})=\alpha(t)\exp \frak{m}\cdot o$, which  in turn is open in $\tau_{\alpha(t)}(G/K)=G/K$.  Finally, since $L_{\alpha(t)}$ is a diffeomorphism, we have that $\pi|_{U_{\alpha(t)}}:U_{\alpha(t)}=L_{\alpha(t)}(U)\rightarrow \pi(U_{\alpha(t)})$ is a diffeomorphism.
The {\it extension of $\dot{\gamma}$ in $\pi(U_{\alpha(t)})$}, which we also denote by $\dot{\gamma}$, assigns to each point $\pi(\alpha(t)g)\in \pi(U_{\alpha(t)})$ the tangent vector
 \begin{equation}
 \label{vf}\dot{\gamma}_{\pi(\alpha(t)g)}=d\pi_{\alpha(t)g}(\dot{\alpha}_{\alpha(t)g}),\end{equation}
where we also abuse the notation to denote by $\dot{\alpha}$ the extension of the tangent vector field along $\alpha$.  The above extension is well defined since $\pi|_{U_{\alpha(t)}}$ is 1-1.
The following objects will be of central interest in this paper.

\begin{definition}\label{v}
For any $W\in \frak{m}$ we define the vector field $\hat{W}$ on $\pi(U_{\alpha(t)})\subset G/K$ by 
\begin{equation}\label{avf} 
\hat{W}_{\pi(\alpha(t)g)}=d\pi_{\alpha(t)g}(W^L_{\alpha(t)g}).
\end{equation}
\end{definition}
\begin{definition}
For any $W \in \frak{m}$, we define the function $G_W:\mathbb R\rightarrow \mathbb R$ by\\
\begin{equation} \label{function} G_W(t)=g(\hat{W}_{\gamma(t)},\nabla_{\dot{\gamma}(t)}\dot{\gamma}(t))_{\gamma(t)}.
\end{equation}
\end{definition}
The function $G_W(t)$ can be used to characterize geodesics in a homogeneous space as shown in the next proposition.
\begin{prop} \label{nikos} Let $\alpha:I\rightarrow G$ be a curve in the Lie group $G$.  Then the curve $\gamma=\pi \circ \alpha:I\rightarrow G/K$ is a geodesic in the Riemannian homogeneous space $(G/K, g)$ if and only if $G_W(t)=0$ for all $t\in \mathbb R$ and for all $W\in \frak{m}$.
\end{prop}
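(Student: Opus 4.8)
The plan is to characterize the geodesic equation $\nabla_{\dot\gamma}\dot\gamma = 0$ by testing it against a family of vector fields that spans the tangent space. Since $\gamma$ is a geodesic exactly when $\nabla_{\dot\gamma(t)}\dot\gamma(t) = 0$ for all $t$, and since the metric $g$ is nondegenerate, this vanishing is equivalent to $g(V_{\gamma(t)}, \nabla_{\dot\gamma(t)}\dot\gamma(t))_{\gamma(t)} = 0$ for all $t$ and for every vector field $V$ whose values at $\gamma(t)$ sweep out all of $T_{\gamma(t)}(G/K)$. The content of the proposition is therefore that the special fields $\hat W$ for $W$ ranging over $\frak m$ furnish exactly such a spanning family at each point $\gamma(t)$.

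The first step I would carry out is to show that $\{\hat W_{\gamma(t)} : W \in \frak m\}$ spans $T_{\gamma(t)}(G/K)$. By Definition \ref{v}, $\hat W_{\gamma(t)} = d\pi_{\alpha(t)}(W^L_{\alpha(t)})$, and by relation (\ref{doh1}) we have $(d\tau_{\alpha(t)^{-1}})_{\gamma(t)}\hat W_{\gamma(t)} = W_{\frak m}$. Since $d\tau_{\alpha(t)^{-1}}$ is a linear isomorphism (as $\tau_{\alpha(t)^{-1}}$ is a diffeomorphism) and since $W \mapsto W_{\frak m} = d\pi_e(W)$ is a bijection from $\frak m$ onto $T_o(G/K)$, the assignment $W \mapsto \hat W_{\gamma(t)}$ is a linear bijection from $\frak m$ onto $T_{\gamma(t)}(G/K)$. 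Thus the $\hat W_{\gamma(t)}$ genuinely realize every tangent direction at $\gamma(t)$.

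Given this, the equivalence is immediate in one direction: if $\gamma$ is a geodesic then $\nabla_{\dot\gamma(t)}\dot\gamma(t)=0$, so $G_W(t) = g(\hat W_{\gamma(t)}, 0)_{\gamma(t)} = 0$ for all $W$ and all $t$. For the converse, suppose $G_W(t) = 0$ for every $W \in \frak m$ and every $t$. Fix $t$; then $g(\hat W_{\gamma(t)}, \nabla_{\dot\gamma(t)}\dot\gamma(t))_{\gamma(t)} = 0$ for all $W \in \frak m$. By the spanning statement just established, as $W$ varies the vectors $\hat W_{\gamma(t)}$ exhaust $T_{\gamma(t)}(G/K)$, so $\nabla_{\dot\gamma(t)}\dot\gamma(t)$ is $g$-orthogonal to the entire tangent space. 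Since $g$ is a Riemannian (hence nondegenerate, positive definite) inner product on each tangent space, this forces $\nabla_{\dot\gamma(t)}\dot\gamma(t) = 0$. As $t$ was arbitrary, $\gamma$ is a geodesic.

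The only genuinely delicate point—and what I expect to be the main obstacle—is making rigorous sense of the expression $\nabla_{\dot\gamma(t)}\dot\gamma(t)$ as a quantity against which one may pair an ambient vector field. The covariant derivative $\nabla_{\dot\gamma}\dot\gamma$ requires $\dot\gamma$ to be extended to a vector field defined on a neighborhood in $M$, not merely along the curve; this is precisely why the careful construction of the local extension of $\dot\gamma$ on $\pi(U_{\alpha(t)})$ preceding Definition \ref{v} was needed, so that $\nabla_{\dot\gamma}\dot\gamma$ and $G_W$ are well defined near each $\gamma(t)$. One must check that the acceleration $\nabla_{\dot\gamma(t)}\dot\gamma(t)$ computed using this extension agrees with the intrinsic acceleration of the curve, which follows because $\nabla_{\dot\gamma}\dot\gamma$ along a curve depends only on the values of the extending field along the curve together with $\dot\gamma$, and the extension restricts to $\dot\gamma$ on $\gamma$. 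Once this is in hand, the argument is the elementary nondegeneracy observation above.
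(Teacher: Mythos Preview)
Your argument is correct and follows essentially the same route as the paper: both reduce the geodesic condition to $g(V_{\gamma(t)},\nabla_{\dot\gamma(t)}\dot\gamma(t))=0$ for a spanning family of test vectors and then observe that the fields $\hat W$ with $W\in\frak m$ suffice. Your justification of the spanning property via relation (\ref{doh1}) is in fact more explicit than the paper's, which simply remarks that $G$-invariance lets one restrict to $\frak m$; the paper also records Koszul's formula at this point, but that is preparation for the later explicit computation of $G_W$ rather than an ingredient needed for the present proposition.
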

\begin{proof}
\noindent By using Koszul's formula we obtain that 
\begin{equation} \label{koz} 
g(V,\nabla_XX)=Xg(V,X)+g(X,[V,X])-\frac{1}{2}Vg(X,X), 
\end{equation}
where $V,X$ are arbitrary vector fields in $M$.\\
Let $X=\dot{\gamma}$ as defined in (\ref{vf}).  Then relation (\ref{koz}) yields
\begin{equation} \label{koz1} g(V,\nabla_{\dot{\gamma}}\dot{\gamma})=\dot{\gamma}g(V,\dot{\gamma})+g(\dot{\gamma},[V,\dot{\gamma}])-\frac{1}{2}Vg(\dot{\gamma},\dot{\gamma}).\end{equation}

By evaluating the function $g(V,\nabla_{\dot{\gamma}}\dot{\gamma})$ at $\gamma(t)\in M$ we obtain
\begin{equation}\label{kozev}
g(V_{\gamma(t)},\nabla_{\dot{\gamma}(t)}\dot{\gamma}(t))_{\gamma(t)}=\dot{\gamma}(t)g(V,\dot{\gamma})+g(\dot{\gamma}(t),[V,\dot{\gamma}]_{\gamma(t)})_{\gamma(t)}-\frac{1}{2}V_{\gamma(t)}g(\dot{\gamma},\dot{\gamma}).
\end{equation}

It follows that $\gamma$ is a geodesic if and only if $\nabla_{\dot{\gamma}(t)}\dot{\gamma}(t)=0$ for every $t\in \mathbb R$, or equivalently if $g(V_{\gamma(t)},\nabla_{\dot{\gamma}(t)}\dot{\gamma}(t))_{\gamma(t)}=0$ for every $t\in \mathbb R$ and for every vector field $V\in M=G/K$.  However, since the metric $g$ is $G$-invariant, our calculations will be restricted on $\frak{m}$.  Thus, without any loss of generality we can choose the arbitrary $V$ to be as in (\ref{avf}).  Therefore, $\gamma$ is a geodesic if and only if $G_W(t)=g(\hat{W}_{\gamma(t)},\nabla_{\dot{\gamma}(t)}\dot{\gamma}(t))_{\gamma(t)}=0.$ 
\end{proof}
\subsection{Geodesics in $G/K$ as orbits of exponential factors.}

We are interested to describe geodesics in $G/K$ of the form $\gamma=\pi \circ \alpha$, where $\alpha:I\rightarrow G$ with $\alpha(t)=\exp(tX)\exp(tY)\exp(tZ)$ for $X,Y,Z\in \frak{m}$.  Our aim is to simplify the right hand side of expression (\ref{kozev}) for the function $G_W(t)$.
For $Z,Y\in \frak{m}$ we define the function $T:\mathbb R\rightarrow \operatorname{Aut}(\frak{g})$ by
\begin{equation} \label{T} 
T(t)=\operatorname{Ad}(\exp(-tZ)\exp(-tY)).
\end{equation}
From now on we will write $T=T(t)$.
\begin{lemma} \label{calculations} Let $X,Y,Z\in \frak{m}$, $\alpha(t)=\exp(tX)\exp(tY)\exp(tZ)$ and $T(t)$ as defined in (\ref{T}).  Then the following relations are valid:
\begin{eqnarray}
\label{lab1} \displaystyle{\left.\frac{d}{dt}\right|_{t=0}T(t)X}&=&[X,Y+Z],\\                          
\label{lab2} \operatorname{Ad}(\exp(tX))X&=&X,\\ 
 \label{lab3} \displaystyle{\left.\frac{d}{ds}\right|_{s=0}\operatorname{Ad}(\exp(-t-s)Z)Y}&=&[TY,Z],\\
 \label{lab4} \displaystyle{\left.\frac{d}{ds}\right|_{s=0}\operatorname{Ad}(\alpha(t+s)^{-1})X}&=&[TX,Z]+[TX,TY].
 \end{eqnarray}
\end{lemma}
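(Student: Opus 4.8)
The plan is to derive all four identities from the single infinitesimal relation $\left.\frac{d}{dt}\right|_{t=0}\operatorname{Ad}(\exp(tX))Y=[X,Y]$ recorded just before the lemma, together with the product rule for differentiation and the fact that each $\operatorname{Ad}(g)$ is a Lie algebra automorphism, so that $\operatorname{Ad}(g)[U,V]=[\operatorname{Ad}(g)U,\operatorname{Ad}(g)V]$ and $\operatorname{Ad}(gh)=\operatorname{Ad}(g)\operatorname{Ad}(h)$. The logical backbone is relation (\ref{lab2}), which I would establish first, since it is used repeatedly to discard the exponential factor that fixes the vector being transported.

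I would prove (\ref{lab2}) at the outset. Since $\operatorname{ad}(X)X=[X,X]=0$, differentiating the curve $t\mapsto\operatorname{Ad}(\exp(tX))X$ and using the one-parameter subgroup property gives $\frac{d}{dt}\operatorname{Ad}(\exp(tX))X=\operatorname{Ad}(\exp(tX))[X,X]=0$; hence the curve is constant, equal to its value $X$ at $t=0$. For (\ref{lab1}) I would write $T(t)=\operatorname{Ad}(\exp(-tZ))\operatorname{Ad}(\exp(-tY))$ using that $\operatorname{Ad}$ is a homomorphism, and apply the product rule at $t=0$, where each factor reduces to the identity. The two surviving terms are $\left.\frac{d}{dt}\right|_{t=0}\operatorname{Ad}(\exp(-tZ))X=[X,Z]$ and $\left.\frac{d}{dt}\right|_{t=0}\operatorname{Ad}(\exp(-tY))X=[X,Y]$, whose sum is $[X,Y+Z]$.

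For (\ref{lab3}) I would differentiate $\operatorname{Ad}(\exp(-(t+s)Z))Y$ in $s$ at $s=0$, obtaining $-\operatorname{ad}(Z)\operatorname{Ad}(\exp(-tZ))Y=[\operatorname{Ad}(\exp(-tZ))Y,Z]$; then I would invoke (\ref{lab2}) in the form $\operatorname{Ad}(\exp(-tY))Y=Y$ to recognise $\operatorname{Ad}(\exp(-tZ))Y=TY$, which yields $[TY,Z]$. For (\ref{lab4}) I would expand $\alpha(t+s)^{-1}=\exp(-(t+s)Z)\exp(-(t+s)Y)\exp(-(t+s)X)$, use (\ref{lab2}) to replace $\operatorname{Ad}(\exp(-(t+s)X))X$ by $X$, and then differentiate the remaining product of two $s$-dependent $\operatorname{Ad}$-factors by the product rule. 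The contribution from the $Z$-factor gives $-\operatorname{ad}(Z)\,TX=[TX,Z]$, while the contribution from the $Y$-factor gives $\operatorname{Ad}(\exp(-tZ))[\operatorname{Ad}(\exp(-tY))X,Y]$, which I would convert using the automorphism property of $\operatorname{Ad}(\exp(-tZ))$ into $[TX,\operatorname{Ad}(\exp(-tZ))Y]=[TX,TY]$, again via (\ref{lab2}).

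The main obstacle is purely the bookkeeping in (\ref{lab4}): one must track which of the three $\operatorname{Ad}$-factors survive differentiation, apply the automorphism identity at exactly the right step to push $\operatorname{Ad}(\exp(-tZ))$ inside the bracket, and consistently reassemble the composite $T=\operatorname{Ad}(\exp(-tZ))\operatorname{Ad}(\exp(-tY))$ while respecting the sign changes introduced by the negative exponents. Once (\ref{lab2}) is available the remaining identities are short, so the effort concentrates on carrying out these manipulations carefully rather than on any conceptual difficulty.
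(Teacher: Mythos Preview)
Your proposal is correct and follows essentially the same approach as the paper: both proofs rest on the infinitesimal identity $\left.\frac{d}{dt}\right|_{0}\operatorname{Ad}(\exp(tX))=\operatorname{ad}(X)$, the multiplicativity of $\operatorname{Ad}$, and the repeated use of (\ref{lab2}) to strip redundant factors. The only tactical differences are that you prove (\ref{lab2}) by showing the curve has vanishing derivative (the paper computes the conjugation directly), and in (\ref{lab4}) you apply the product rule together with the automorphism identity $\operatorname{Ad}(g)[U,V]=[\operatorname{Ad}(g)U,\operatorname{Ad}(g)V]$, whereas the paper instead inserts $\exp(tZ)\exp(-tZ)$ to reorder the factors before differentiating; both routes arrive at the same two bracket terms.
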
  
\begin{proof}For (\ref{lab1}), let $c(t)=\exp(-tZ)\exp(-tY)$.  Then

\begin{equation*}
\begin{array}{lll}
\displaystyle{\left.\frac{d}{dt}\right|_{t=0}T(t)X}&=&\displaystyle{\left.\frac{d}{dt}\right|_{t=0}\operatorname{Ad}(c(t))X}=(d\operatorname{Ad})_e(\dot{c}(0))X\ \\\\

&=&\operatorname{ad}(\dot{c}(0))X=[-Y-Z,X]=[X,Y+Z].
\end{array}
\end{equation*}
 
For (\ref{lab2}) we have
\begin{equation*}
\begin{array}{lll}
\displaystyle{\operatorname{Ad}(\exp(tX))X} &=&(dc_{\exp(tX)})_eX=\displaystyle{\left.\frac{d}{ds}\right|_{s=0}\exp (tX) \exp(sX) \exp(-tX)}
\ \\\\
&=&
\displaystyle{\left.\frac{d}{ds}\right|_{s=0}\exp(t+s-t)X}=X.
\end{array}
\end{equation*}
For (\ref{lab3}) we have
\begin{equation*}
\begin{array}{lll}
\displaystyle{\left.\frac{d}{ds}\right|_{s=0}\operatorname{Ad}(\exp(-t-s)Z)Y} &=&\displaystyle{\left.\frac{d}{ds}\right|_{s=0}\operatorname{Ad}(\exp(-sZ))\circ \operatorname{Ad}(\exp(-tZ))}\ \\\\
&=&(d\operatorname{Ad})_e(-Z)(\operatorname{Ad}(\exp(-tZ))Y)
\ \\\\
&=&
(d\operatorname{Ad})_e(-Z)(TY)=[-Z,TY]=[TY,Z].\ \\\\
\end{array}
\end{equation*}
where in the third equality we used (\ref{lab2}).  Finally, for (\ref{lab4}) we have
\begin{eqnarray*}
&&\displaystyle{\left.\frac{d}{ds}\right|_{s=0}\operatorname{Ad}(\alpha(t+s)^{-1})X}\\
&& = \displaystyle{\left.\frac{d}{ds}\right|_{s=0}\operatorname{Ad}(\exp (-tZ) \exp (-sZ) \exp (-tY) \exp (-sY) \exp (-tX) \exp (-sX))X}\\
&&=\displaystyle{\left.\frac{d}{ds}\right|_{s=0}\operatorname{Ad}(\exp (-tZ) \exp (-sZ) \exp (-tY) \exp (-sY))}\\
&& \displaystyle{\quad\circ \operatorname{Ad}( \exp (-tX) \exp (-sX))X}\\
&&=\displaystyle{\left.\frac{d}{ds}\right|_{s=0}\operatorname{Ad}(\exp (-tZ) \exp (-sZ) \exp (-tY) \exp (-sY))X}\\
&&=\displaystyle{\left.\frac{d}{ds}\right|_{s=0}\operatorname{Ad}(\exp (-sZ) \exp (-tZ) \exp (-sY) \exp (tZ) \exp (-tZ) \exp (-tY))X}\\
&&=\displaystyle{\left.\frac{d}{ds}\right|_{s=0}\operatorname{Ad}(\exp (-sZ) \exp (-s\operatorname{Ad}(\exp (-tZ))Y))}\\
&&\displaystyle{\quad\circ \operatorname{Ad}(\exp(-tZ)\exp(-tY))X}\\
&&=\displaystyle{\left.\frac{d}{ds}\right|_{s=0}\operatorname{Ad}(\exp (-sZ) \exp (-s\operatorname{Ad}(\exp (-tZ))Y))\circ TX}\\
&&=[TX,Z]+[TX,\operatorname{Ad}(\exp(-tZ))Y]=[TX,Z]+[TX,TY].
\end{eqnarray*} 
\end{proof}

\begin{lemma} \label{curve} 
Let $c:I \rightarrow \frak{g}$ be a curve in $\frak{g}$.  
Then $\displaystyle{\left.\frac{d}{ds}\right |_0c(s)_{\frak{m}}}=\displaystyle{\left(\left.\frac{d}{ds}\right |_0c(s)\right)_{\frak{m}}}$.
\end{lemma}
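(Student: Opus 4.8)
The plan is to observe that the entire content of the statement is the commutativity of differentiation with the application of the fixed linear map $d\pi_e$. Recall that by our notational convention $X_{\frak{m}}=d\pi_e(X)$ for $X\in\frak{g}$, so the curve $s\mapsto c(s)_{\frak{m}}$ is nothing but the composition $d\pi_e\circ c$ of the smooth curve $c:I\to\frak{g}$ with the linear map $d\pi_e:\frak{g}\to T_o(G/K)$. Since both $\frak{g}$ and $T_o(G/K)$ are finite-dimensional vector spaces, this composition is a genuine curve in the \emph{fixed} vector space $T_o(G/K)$, so its derivative at $0$ is well defined and can be computed by a difference quotient.

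First I would write out the difference quotient defining the left-hand side and use the linearity of $d\pi_e$ to factor it out:
\begin{equation*}
\left.\frac{d}{ds}\right|_0 c(s)_{\frak{m}}
=\lim_{s\to 0}\frac{d\pi_e(c(s))-d\pi_e(c(0))}{s}
=\lim_{s\to 0}d\pi_e\!\left(\frac{c(s)-c(0)}{s}\right).
\end{equation*}
Then, since $d\pi_e$ is a linear map between finite-dimensional spaces it is continuous, so it commutes with the limit, giving
\begin{equation*}
\lim_{s\to 0}d\pi_e\!\left(\frac{c(s)-c(0)}{s}\right)
=d\pi_e\!\left(\lim_{s\to 0}\frac{c(s)-c(0)}{s}\right)
=d\pi_e\!\left(\left.\frac{d}{ds}\right|_0 c(s)\right)
=\left(\left.\frac{d}{ds}\right|_0 c(s)\right)_{\frak{m}}.
\end{equation*}
This chain of equalities is exactly the asserted identity.

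Alternatively, and more conceptually, I would invoke the chain rule: the differential of a linear map at any point equals the map itself, so $d(d\pi_e)_{c(0)}=d\pi_e$, whence $\frac{d}{ds}|_0(d\pi_e\circ c)=d(d\pi_e)_{c(0)}(c'(0))=d\pi_e(c'(0))$. There is no real obstacle here; the only point worth stating carefully is that $c(s)_{\frak{m}}$ lies in the single tangent space $T_o(G/K)$ for every $s$, so that differentiating it (rather than differentiating a vector field along a moving base point) is legitimate. This is precisely what makes the interchange of $d\pi_e$ with $\frac{d}{ds}$ unambiguous, and it is the reason the lemma is recorded: later computations apply $d\pi$ to curves obtained by differentiating $\operatorname{Ad}$-expressions, and this lemma lets us move the derivative past the projection.
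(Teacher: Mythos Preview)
Your proof is correct and essentially matches the paper's approach. The paper uses exactly your ``alternative'' chain-rule argument, noting that $d(d\pi_e)_{c(0)}=d\pi_e$ since $d\pi_e$ is linear; your primary difference-quotient argument is simply a more explicit rephrasing of the same fact.
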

\begin{proof} It is
\begin{equation*}
\begin{array}{lll}
\displaystyle{\left.\frac{d}{ds}\right |_0c(s)_{\frak{m}}}&=&\displaystyle{\left.\frac{d}{ds}\right |_0d\pi_e(c(s))}=\displaystyle{d(d\pi_e)_{c(0)}\left(\left.\frac{d}{ds}\right |_0c(s)\right)}\ \\\\

&=&\displaystyle{d\pi _e\left(\left.\frac{d}{ds}\right |_0c(s)\right)}=\displaystyle{\left(\left.\frac{d}{ds}\right |_0c(s)\right)_{\frak{m}}}.
\end{array}
\end{equation*} 
\end{proof}

We can now simplify the function $G_W(t)$.
\begin{prop} \label{generalform} Let $\gamma:I\rightarrow G/K$ be the curve $\gamma(t)=\exp(tX)\exp(tY)\exp(tZ)\cdot o$ in  $G/K$, where $X,Y,Z\in \frak{m}$.  Then for every $W\in \frak{m}$, the function $G_W(t)$ defined in (\ref{function}) can be expressed as
\begin{equation} \label{*}
\begin{array}{lll}
G_W(t)&=&g((TX)_{\frak{m}}+(TY)_{\frak{m}}+Z_{\frak{m}},[W, TX+TY+Z]_{\frak{m}})_o\ \\\\

&+&g(W,[TX, TY+Z]_{\frak{m}}+[TY, Z]_{\frak{m}})_o.
\end{array}
\end{equation}
\end{prop}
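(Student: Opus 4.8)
The plan is to start from Koszul's identity, already recorded as (\ref{kozev}), taking the arbitrary field $V$ to be $\hat W$ and $X$ to be $\dot\gamma$, so that $G_W(t)$ splits as the sum of the three terms
$$\dot\gamma(t)\,g(\hat W,\dot\gamma),\qquad g\bigl(\dot\gamma(t),[\hat W,\dot\gamma]_{\gamma(t)}\bigr)_{\gamma(t)},\qquad -\tfrac12\,\hat W_{\gamma(t)}\,g(\dot\gamma,\dot\gamma).$$
Since $g$ is $G$-invariant and $\gamma(t)=\alpha(t)\cdot o$, I would transport every inner product back to $o$ by $d\tau_{\alpha(t)^{-1}}$, reducing the whole computation to algebra in $\frak{m}$ controlled by the bracket and the projection $(\ \cdot\ )_{\frak{m}}$. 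The two basic ingredients are the pullbacks of $\hat W_{\gamma(t)}$ and of the velocity $\dot\gamma(t)$.

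First I would compute these pullbacks. Relation (\ref{doh1}) with $g=\alpha(t)$ gives at once $d\tau_{\alpha(t)^{-1}}\hat W_{\gamma(t)}=W$. For the velocity I would write the derivative of the lifted curve as a right-invariant vector,
$$\dot\alpha(t)=\bigl(X+\Ad(\exp tX)Y+\Ad(\exp tX\exp tY)Z\bigr)^R_{\alpha(t)},$$
and apply (\ref{doh2}); expanding $\alpha(t)^{-1}=\exp(-tZ)\exp(-tY)\exp(-tX)$ and cancelling the innermost factors by (\ref{lab2}) collapses the three summands to $TX$, $TY$ and $Z$, whence $d\tau_{\alpha(t)^{-1}}\dot\gamma(t)=(TX)_{\frak{m}}+(TY)_{\frak{m}}+Z_{\frak{m}}$. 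This already identifies the left entries of both inner products in (\ref{*}).

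Next I would treat the three terms. For the directional-derivative term along $\dot\gamma$, I would realise it as $\frac{d}{ds}\big|_{s=t}$ of the transported expression $g\bigl(W,(T(s)X)_{\frak{m}}+(T(s)Y)_{\frak{m}}+Z_{\frak{m}}\bigr)$, use Lemma \ref{curve} to pass $\frac{d}{ds}$ through $(\ \cdot\ )_{\frak{m}}$, and then invoke Lemma \ref{calculations}: relations (\ref{lab3}) and (\ref{lab4}) are \emph{exactly} the derivatives $\frac{d}{ds}(T(s)Y)=[TY,Z]$ and $\frac{d}{ds}(T(s)X)=[TX,Z]+[TX,TY]$, which assemble the combination $[TX,TY+Z]_{\frak{m}}+[TY,Z]_{\frak{m}}$ forming the second line of (\ref{*}). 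The quadratic dependence of the remaining two terms on the velocity — namely the $\hat W$-derivative of $g(\dot\gamma,\dot\gamma)$ and the bracket $g(\dot\gamma,[\hat W,\dot\gamma])$ — is what supplies the first line $g\bigl((TX)_{\frak{m}}+(TY)_{\frak{m}}+Z_{\frak{m}},[W,TX+TY+Z]_{\frak{m}}\bigr)_o$; for the bracket I would pass to the lifts $W^L$ and the right-invariant representative of $\dot\gamma$ and use (\ref{doh3})–(\ref{doh4}).

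The step I expect to be the genuine obstacle is precisely this bracket term together with the bookkeeping of the field extensions away from the curve. The objects $\hat W$ and $\dot\gamma$ are only the prescribed projections on $\pi(U_{\alpha(t)})$, and a naive use of $[W^L,(\ \cdot\ )^R]=0$ from (\ref{doh4}) would overlook that $W^L$ represents $\hat W$ only along the submanifold $\alpha(t)\exp\frak{m}$, so the transverse behaviour re-enters through the isotropy action of $K$ in the two directional-derivative terms. The delicate point is therefore to keep track of these $\frak{k}$-contributions and to check that everything beyond the pure $\frac{d}{ds}$-term recombines cleanly into the single inner product $g\bigl((TX)_{\frak{m}}+(TY)_{\frak{m}}+Z_{\frak{m}},[W,TX+TY+Z]_{\frak{m}}\bigr)_o$ — the bracket being taken with the full $\frak{g}$-element $TX+TY+Z$ rather than its $\frak{m}$-part. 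Once this recombination is verified, collecting the two surviving inner products yields (\ref{*}).
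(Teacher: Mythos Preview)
Your strategy is correct and leads to (\ref{*}), but it differs from the paper's in one notable respect: the choice of extension of $\dot\alpha$ off the curve. You write $\dot\alpha(t)$ as a purely \emph{right}-invariant vector $A^R_{\alpha(t)}$ with $A=X+\Ad(\exp tX)Y+\Ad(\exp tX\exp tY)Z$, whereas the paper writes it as the \emph{mixed} combination
\[
\dot\alpha(t)=\bigl(X^R+Z^L+(\Ad(\exp(-tZ))Y)^L\bigr)_{\alpha(t)}.
\]
This changes the distribution of the three Koszul terms. With the paper's mixed lift, relation (\ref{doh4}) kills only the $X^R$-part in the bracket, so the second Koszul term is $g\bigl((TX)_{\frak m}+(TY)_{\frak m}+Z_{\frak m},[W,TY+Z]_{\frak m}\bigr)_o$; only the $X^R$-part survives in the $\hat W$-derivative of $\|\dot\gamma\|^2$, so the third term contributes $g\bigl((TX)_{\frak m}+(TY)_{\frak m}+Z_{\frak m},[W,TX]_{\frak m}\bigr)_o$; their sum is the first line of (\ref{*}). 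With your purely right-invariant lift, (\ref{doh4}) makes the bracket term vanish \emph{entirely}, and the whole first line of (\ref{*}) is produced by the third term alone, since $\Ad(\exp(-sW))\Ad(\alpha(t)^{-1})A=\Ad(\exp(-sW))(TX+TY+Z)$. Your first Koszul term matches the paper's and gives the second line of (\ref{*}); your observation that (\ref{lab3}) and (\ref{lab4}) are precisely the needed $s$-derivatives of $T(t+s)Y$ and $T(t+s)X$ is exactly how the paper proceeds as well.

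So your anticipated ``obstacle'' is not a real one: the naive use of $[W^L,A^R]=0$ is correct here, because the same local section $\pi|_{U_{\alpha(t)}}$ that the paper uses to define $\hat W$ and the extension of $\dot\gamma$ lets you transport the Lie bracket on $M$ to the bracket of the lifts on $U_{\alpha(t)}$, and the paper invokes (\ref{doh3})--(\ref{doh4}) in exactly the same way for its mixed lift. There is no hidden $\frak k$-contribution to track down; with your extension the second Koszul term is genuinely zero and nothing needs to ``recombine''. If anything, your route is slightly cleaner than the paper's, at the cost of a less transparent formula for $\dot\alpha$.
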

\begin{proof}
Recall that $\dot{\gamma}(t)=(d\pi)_{\alpha(t)}(\dot{\alpha}(t))$, where $\alpha(t)=\exp(tX) \exp(tY) \exp(tZ)$, hence we need at first to compute $\dot{\alpha}(t)$.  By differentiating the Lie group product we obtain that

\begin{eqnarray}
&& \displaystyle{\dot{\alpha}(t)} =\displaystyle{\left.\frac{d}{ds}\right|_{s=0}\alpha(t+s)}=\displaystyle{\left.\frac{d}{ds}\right|_{s=0}\exp (t+s)X \exp (t+s)Y \exp(t+s)Z}\nonumber\\
&& =
\displaystyle{\left.\frac{d}{ds}\right|_{s=0}\exp (tX) \exp (tY) \exp(t+s)Z}+\displaystyle{\left.\frac{d}{ds}\right|_{s=0}\exp (tX) \exp (t+s)Y \exp(tZ)}\nonumber \\
&& +
\displaystyle{\left.\frac{d}{ds}\right|_{s=0}\exp (t+s)X \exp (tY) \exp(tZ)}.\label{eq1}
\end{eqnarray}

\noindent Since $\exp(t+s)Z=\exp(tZ)\exp(sZ)$, the first term in (\ref{eq1}) is written as
\begin{equation} \label{ter1}
\begin{array}{lll} 
&&\displaystyle{\left.\frac{d}{ds}\right|_{s=0}\exp (tX) \exp (tY) \exp(t+s)Z}\ \\\\

&&=\displaystyle{\left.\frac{d}{ds}\right|_{s=0}\alpha(t)\exp (sZ)} = \displaystyle{\left.\frac{d}{ds}\right|_{s=0}L_{\alpha(t)}(\exp (sZ))}= (dL_{\alpha(t)})_eZ=Z^L_{\alpha(t)}.
\end{array}
\end{equation}
The third term in (\ref{eq1}) is written as
\begin{equation} \label{ter2} 
\begin{array}{lll}
&&\displaystyle{\left.\frac{d}{ds}\right|_{s=0}\exp (t+s)X \exp (tY) \exp(tZ)}\ \\\\

&&=\displaystyle{\left.\frac{d}{ds}\right|_{s=0}\exp(sX)\alpha(t)}=\displaystyle{\left.\frac{d}{ds}\right|_{s=0}R_{\alpha(t)}(\exp (sX))}=(dR_{\alpha(t)})_eX=X^R_{\alpha(t)},
\end{array}
\end{equation}
and finally the second term of (\ref{eq1}) is equal to  
\begin{equation*}
\displaystyle{\left.\frac{d}{ds}\right|_{s=0}\exp (tX) \exp (tY) \exp(tZ)\exp(-tZ)\exp(sY)\exp(tZ)}.
\end{equation*} 
Therefore, we have that 
\begin{equation} \label{ter3} 
\begin{array}{lll}
&& \displaystyle{\left.\frac{d}{ds}\right|_{s=0}\exp (tX) \exp (t+s)Y \exp(tZ)}=\displaystyle{\left.\frac{d}{ds}\right|_{s=0}\alpha(t) \exp (s\operatorname{Ad}(\exp (-tZ))Y)}\ \\\\

&&\ =(\operatorname{Ad}(\exp (-tZ)Y)^L)_{\alpha(t)}. 
\end{array}
\end{equation}
\noindent By adding (\ref{ter1}), (\ref{ter2}) and (\ref{ter3}) we obtain that
\begin{equation}\label{alpha}
\dot{\alpha}(t)=(X^R+Z^L+(\operatorname{Ad}(\exp (-tZ)Y)^L))_{\alpha(t)}.
\end{equation}

 Next, we calculate each of the three terms in the right hand side of (\ref{kozev}) to obtain the desired expression of the function $G_W(t)$.  By using relations (\ref{alpha}), (\ref{vf}) and (\ref{avf}) the first term of (\ref{kozev}) becomes 
\begin{equation*}
\begin{array}{lll}
&& \displaystyle{\dot{\gamma}(t)g(\hat{W},\dot{\gamma})}=\dot{\gamma}_{\gamma(t)}g(\hat{W},\dot{\gamma})=\displaystyle{\left.\frac{d}{ds}\right|_{s=0}g(\hat{W}_{\gamma(t+s)}, \dot{\gamma}_{\gamma(t+s)})_{\gamma(t+s)}}
\ \\\\
&&\ = 
\displaystyle{\left.\frac{d}{ds}\right|_{s=0}g\bigg(d\pi_{\alpha(t+s)}W^L_{\alpha(t+s)}, d\pi_{\alpha(t+s)}(X^R}+Z^L\ \\\\

&&\ \displaystyle{\qquad\qquad\qquad+(\operatorname{Ad}(\exp(-t-s)Z)Y)^L)_{\alpha(t+s)}\bigg)_{\alpha(t+s)\cdot o}}.
\end{array}
\end{equation*}

Since the metric $g$ is $G$-invariant, the above term is equal to
\begin{eqnarray*}
&&\displaystyle{\left.\frac{d}{ds}\right|_{s=0}g\Big((d\tau_{\alpha^{-1}(t+s)})_{\gamma(t+s)}(d\pi_{\alpha(t+s)})(W^L_{\alpha(t+s)})},\\
&&\ \ (d\tau_{\alpha^{-1}(t+s)})_{\gamma(t+s)}(d\pi_{\alpha(t+s)})(X^R+Z^L+(\operatorname{Ad}(\exp(-t-s)Z)Y)^L)\Big)_o.
\end{eqnarray*}

By using relations (\ref{doh1})-(\ref{doh4}) we obtain that
\begin{equation} \label{temp1} 
\begin{array}{lll}
&& \dot{\gamma}(t)g(\hat{W},\dot{\gamma})=\displaystyle{\left.\frac{d}{ds}\right|_{s=0}g\Big(W, (\operatorname{Ad}(\alpha(t+s)^{-1})X)_{\frak{m}}}\ \\\\

&&\ \ +\displaystyle{(\operatorname{Ad}(\exp (-t-s)Z)Y)_{\frak{m}}\Big)_o}.
\end{array}
 \end{equation}

Therefore, by using Lemmas \ref{calculations}, \ref{curve} and equation (\ref{temp1}), the first term of (\ref{kozev}) becomes
\begin{equation} \label{term1} 
g(W, [TX, Z]_{\frak{m}}+[TX, TY]_{\frak{m}}+[TY, Z]_{\frak{m}})_o.
\end{equation} 
Next, we use  (\ref{vf}), (\ref{avf}), relations (\ref{doh1})-(\ref{doh4}) and the $G$-invariance of the metric to write  the second term of  (\ref{kozev}) as
\begin{equation*}
\begin{array}{lll}
&& \displaystyle{g(\dot{\gamma}(t),[\hat{W},\dot{\gamma}]_{\gamma(t)})_{\gamma(t)}}\ \\\\

&&\ \ =g\Big(d\pi_{\alpha(t)}(X^R+Z^L+(\operatorname{Ad}(\exp (-tZ))Y)^L)_{\alpha(t)},\ \\\\

&&\ \ \ [d\pi_{\alpha(t)}(W^L_{\alpha(t)}), d\pi_{\alpha(t)}(X^R+Z^L+(\operatorname{Ad}(\exp (-tZ))Y)^L)_{\alpha(t)}]\Big)_{{\alpha(t)}\cdot o}
\ \\\\

&&\ \ =g\Big((d\tau_{\alpha(t)^{-1}})_{\alpha(t)\cdot o}(d\pi_{\alpha(t)})(X^R+Z^L+(\operatorname{Ad}(\exp (-tZ))Y)^L)_{\alpha(t)},\ \\\\

&&\ \ \ (d\tau_{\alpha(t)^{-1}})_{\alpha(t)\cdot o}(d\pi_{\alpha(t)})[W^L, X^R+Z^L+(\operatorname{Ad}(\exp (-tZ))Y)^L]_{\alpha(t)}\Big)_{o}\ \\\\

&&\ \ =g\Big((\operatorname{Ad}(\alpha(t)^{-1})X)_{\frak{m}}+Z_{\frak{m}}+(\operatorname{Ad}(\exp(-tZ))Y)_{\frak{m}},\ \\\\

&&\ \ \ (d\tau_{\alpha(t)^{-1}})_{\alpha(t)\cdot o}(d\pi_{\alpha(t)})[W, \operatorname{Ad}(\exp(-tZ))Y+Z]^L_{\alpha(t)}\Big)_{o}\ \\\\

&&\ \ =g\Big((\operatorname{Ad}(\exp(-tZ)\exp(-tY))X)_{\frak{m}}+Z_{\frak{m}}+(\operatorname{Ad}(\exp(-tZ))Y)_{\frak{m}},\ \\\\

&&\ \ \ [W,\operatorname{Ad}(\exp(-tZ))Y+Z]_{\frak{m}}\Big)_o.
\end{array}
\end{equation*}
Therefore, the second term of (\ref{kozev}) is equal to 
\begin{equation} \label{term2} 
g((TX)_{\frak{m}}+Z_{\frak{m}}+(TY)_{\frak{m}}, [W, TY+Z]_{\frak{m}})_{o}.
\end{equation} 
For $X\in \frak{m}$ let $\left\|X\right\|^2=g(X,X)$.  Then by using similar calculations as above, the third term of (\ref{kozev}) becomes 

\small
\begin{equation*}
\begin{array}{lll}
&& -\frac{1}{2}\hat{W}_{\gamma(t)}g(\dot{\gamma},\dot{\gamma})=-\frac{1}{2}\hat{W}_{\gamma(t)}\left\|\dot{\gamma}\right\|^2=-\frac{1}{2}d\pi_{\alpha(t)}(W^L_{\alpha(t)})\left\|\dot{\gamma}\right\|^2\ \\\\
&&\ \ =
\displaystyle{-\frac{1}{2}}W^L_{\alpha(t)}(\left\|\dot{\gamma}\right\|^2\circ \pi)=\displaystyle{-\frac{1}{2}\left.\frac{d}{ds}\right|_{s=0}}\left\|\dot{\gamma}_{\pi(\alpha(t)\exp(sW))}\right\|^2_{\alpha(t)\exp(sW)\cdot o}
\ \\\\
&&\ \ = 
\displaystyle{-\frac{1}{2}\left.\frac{d}{ds}\right|_{s=0}}\Big\|(d\pi_{\alpha(t)\exp(sW)})(X^R+Z^L\ \\\\
&&\ \ \ \ \ \ \ \qquad\qquad+
(\operatorname{Ad}(\exp(-tZ))Y)^L)_{\alpha(t)\exp(sW)}\Big\|^2_{\alpha(t)\exp(sW)\cdot o}\ \\\\
&&\ \ =
\displaystyle{-\frac{1}{2}\left.\frac{d}{ds}\right|_{s=0}}\Big\|(d\tau_{\exp(-sW)\alpha(t)^{-1}})_{\alpha(t)\exp(sW)\cdot o}(d\pi_{\alpha(t)\exp(sW)})(X^R+Z^L\ \\\\
&&\ \ \ \ \ \ \ \qquad\qquad+
(TY)^L)_{\alpha(t)\exp(sW)} \Big\|^2_o\ \\\\
&&\ \ =
\displaystyle{-\frac{1}{2}\left.\frac{d}{ds}\right|_{s=0}}\Big\|(\operatorname{Ad}(\exp(-sW)\alpha(t)^{-1})X)_{\frak{m}}+Z_{\frak{m}}+(TY)_{\frak{m}}\Big\|^2_o\ \\\\
&&\ \ =
\displaystyle{-\frac{1}{2}\left.\frac{d}{ds}\right|_{s=0}}\Big\|(\operatorname{Ad}(\exp(-sW)\exp(-tZ)\exp(-tY))X)_{\frak{m}}+Z_{\frak{m}}+(TY)_{\frak{m}}\Big\|^2_o\ \\\\
&&\ \ =
\displaystyle{-\frac{1}{2}\left.\frac{d}{ds}\right|_{s=0}}\Big\|(\operatorname{Ad}(\exp(-sW))TX)_{\frak{m}}+Z_{\frak{m}}+(TY)_{\frak{m}}\Big\|^2_o.
\end{array}
\end{equation*}
By similar computations as in the proof of Lemma \ref{calculations} it follows that
$$
\displaystyle{\left.\frac{d}{ds}\right|_{s=0}\operatorname{Ad}(\exp(-sW))\circ TX=[TX,W]},
$$
and by using Lemma \ref{curve}, the third term of  (\ref{kozev}) becomes
\begin{equation} \label{term3}
 -g([TX, W]_{\frak{m}}, (TX)_{\frak{m}}+Z_{\frak{m}}+(TY)_{\frak{m}})_o.
\end{equation}
We finally add (\ref{term1}), (\ref{term2}), (\ref{term3}) to obtain that

\begin{equation*}
\begin{array}{lll}
G_W(t)&=&g((TX)_{\frak{m}}+(TY)_{\frak{m}}+Z_{\frak{m}},[W, TX+TY+Z]_{\frak{m}})_o\ \\\\

&+&g(W,[TX, TY+Z]_{\frak{m}}+[TY, Z]_{\frak{m}})_o,
\end{array}
\end{equation*}
which proves the proposition.
\end{proof}

\section{Generalised Wallach spaces}
 Let $M=G/K$ be a compact homogeneous space where $G$ is a semisimple Lie group which acts almost effectively on $M$ and $K$.  Recall the reductive decomposition $\frak{g}=\frak{k}\oplus \frak{m}$ where $\frak{g},\frak{k}$ are the Lie algebras of $G,K$ respectively and $\frak{m}\equiv T_oM$.  Let $B$ be the Killing form of $\frak{g}$.  

\begin{definition} (\cite{NRS}) A generalised Wallach space is a homogeneous space $M=G/K$ whose isotropy representation $\frak{m}$ decomposes into three $\operatorname{Ad}(K)$-invariant irreducible and pairwise orthogonal  submodules as
\begin{equation}\label{dec}
\frak{m}=\frak{m}_1\oplus\frak{m}_2\oplus\frak{m}_3,
\end{equation}
 which satisfy the relations
\begin{equation}\label{brackets}
[\frak{m}_i,\frak{m}_i]\subset \frak{k}, \quad i=1,2,3.\
\end{equation}
\end{definition}  
Despite their simple description a complete  classification of these spaces was given only very recently by Yu.G. Nikonorov in \cite{Ni2} and \cite{Ch-Ka-Li}.
We give some examples of generalised Wallach spaces.
\begin{example}
\textnormal{
The Wallach spaces 
\begin{equation*}SU(3)/T_{\mbox{max}}, \quad Sp(3)/(SU(2)\times SU(2)\times SU(2)), \quad F_4/\operatorname{Spin}(8).
\end{equation*}
}
\end{example}
\begin{example}\label{gfm}
\textnormal{The generalized flag manifolds 
\begin{equation*}SU(l+m+n)/S(U(l)\times U(m)\times U(n)), \quad SO(2l)/(U(1)\times U(l-1)),\quad E_6/(U(1)\times U(1)\times \operatorname{Spin}(8)).\end{equation*}}
\end{example}
\begin{example}
\textnormal{The homogeneous spaces
\begin{equation*} SO(l+m+n)/(SO(l)\times SO(m)\times SO(n)),\quad Sp(l+m+n)/(Sp(l)\times Sp(m)\times Sp(n)).
\end{equation*}
}
\end{example}
\begin{example}
\textnormal{The Stiefel manifolds  $SO(n+2)/SO(2)$.}\end{example}

The following property is mentioned in \cite[p. 169]{Ni} without proof.
\begin{lemma} Let $M=G/K$ be a generalized Wallach space.  Then the  submodules $\frak{m}_i$ $(i=1,2,3)$  satisfy the relations
\begin{equation}\label{rel3}[\frak{m}_1,\frak{m}_2]\subset \frak{m}_3 \qquad [\frak{m}_1,\frak{m}_3]\subset \frak{m}_2 \qquad [\frak{m}_2,\frak{m}_3]\subset \frak{m}_1. \end{equation}\end{lemma}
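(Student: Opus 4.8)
The plan is to exploit the two structural facts at our disposal: the $\Ad(K)$-invariance of each summand, which gives $[\frak{k},\frak{m}_i]\subset\frak{m}_i$, together with the ad-invariance of the Killing form. Since $G$ is compact and semisimple, $-B$ is a positive definite inner product, so $B$ is nondegenerate and the decomposition $\frak{g}=\frak{k}\oplus\frak{m}_1\oplus\frak{m}_2\oplus\frak{m}_3$ is $B$-orthogonal. I would prove $[\frak{m}_1,\frak{m}_2]\subset\frak{m}_3$; the remaining two inclusions then follow by the identical argument after cyclically permuting the indices.

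First I would fix $X\in\frak{m}_1$ and $Y\in\frak{m}_2$ and consider the unique decomposition $[X,Y]=[X,Y]_{\frak{k}}+[X,Y]_{\frak{m}_1}+[X,Y]_{\frak{m}_2}+[X,Y]_{\frak{m}_3}$. Because $B$ is nondegenerate on each orthogonal summand, it suffices to verify that $B([X,Y],W)=0$ for every $W$ in $\frak{k}$, in $\frak{m}_1$, and in $\frak{m}_2$; this forces the first three components to vanish and leaves $[X,Y]\in\frak{m}_3$. The key computational device is the cyclic symmetry $B([X,Y],Z)=B([Y,Z],X)=B([Z,X],Y)$, which lets me move one bracketed vector out of the bracket so that it pairs against a vector from a complementary summand.

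Concretely: for $W\in\frak{k}$, I would write $B([X,Y],W)=B([W,X],Y)$; since $[W,X]\in[\frak{k},\frak{m}_1]\subset\frak{m}_1$ and $Y\in\frak{m}_2$, orthogonality of $\frak{m}_1$ and $\frak{m}_2$ gives $0$. For $U\in\frak{m}_1$, I would write $B([X,Y],U)=B([U,X],Y)$; now $[U,X]\in[\frak{m}_1,\frak{m}_1]\subset\frak{k}$ while $Y\in\frak{m}$, so $B([U,X],Y)=0$ by $\frak{k}\perp\frak{m}$. For $V\in\frak{m}_2$, I would write $B([X,Y],V)=B([Y,V],X)$; here $[Y,V]\in[\frak{m}_2,\frak{m}_2]\subset\frak{k}$ and $X\in\frak{m}$, so the pairing again vanishes. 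Hence all three components are zero and $[X,Y]\in\frak{m}_3$.

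There is no serious obstacle here; the argument is a short bookkeeping computation rather than a deep one. The only points demanding care are confirming that the inner product rendering the summands orthogonal is a multiple of $-B$—so that ad-invariance of $B$ can be legitimately combined with orthogonality—and correctly choosing, in each of the three cases, the cyclic rearrangement that sends the inner bracket either into $\frak{k}$ (invoking $\frak{k}\perp\frak{m}$) or into a summand orthogonal to its partner. Once $[\frak{m}_1,\frak{m}_2]\subset\frak{m}_3$ is established, relabelling $(1,2,3)\mapsto(2,3,1)$ and $(1,2,3)\mapsto(3,1,2)$ yields $[\frak{m}_2,\frak{m}_3]\subset\frak{m}_1$ and $[\frak{m}_1,\frak{m}_3]\subset\frak{m}_2$, completing the lemma.
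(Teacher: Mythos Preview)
Your proof is correct and follows essentially the same approach as the paper: decompose $[X,Y]$ along $\frak{k}\oplus\frak{m}_1\oplus\frak{m}_2\oplus\frak{m}_3$, then use the $\ad$-invariance of $B$ together with $[\frak{k},\frak{m}_i]\subset\frak{m}_i$, $[\frak{m}_i,\frak{m}_i]\subset\frak{k}$, and $B$-orthogonality to kill the $\frak{k}$-, $\frak{m}_1$-, and $\frak{m}_2$-components. The only cosmetic difference is that the paper tests against the components $X_{\frak{k}}, X_{\frak{m}_1}, X_{\frak{m}_2}$ themselves (so that $B(X_{\frak{k}},X_{\frak{k}})=0$ forces $X_{\frak{k}}=0$ by definiteness), whereas you test against arbitrary $W\in\frak{k}$, $U\in\frak{m}_1$, $V\in\frak{m}_2$ and invoke nondegeneracy; these are equivalent.
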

\begin{proof}
We prove the first relation and the others follow in a similar way.
Let $X_i\in \frak{m}_i$ $(i=1,2)$.   Then
\begin{equation}\label{sin}
[X_1,X_2]=X_{\frak{k}}+X_{\frak{m}_1}+X_{\frak{m}_2}+X_{\frak{m}_3},
\end{equation}
where $X_{\frak{k}},X_{\frak{m}_i}$ are the projections of $[X_1,X_2]$ on $X_{\frak{k}},X_{\frak{m}_i}$ respectively.  By using the $B$-orthogonality of the spaces $\frak{k},\frak{m}_1,\frak{m}_2,\frak{m}_3$ and the relations $[\frak{k},\frak{m}_i]\subset \frak{m}_i$,  $[\frak{m}_i,\frak{m}_i]\subset \frak{k}$, equation (\ref{sin}) yields
\begin{equation*}B(X_{\frak{k}},X_{\frak{k}})=B([X_1,X_2],X_{\frak{k}})=B([X_{\frak{k}},X_1],X_2)=0,\end{equation*}
\begin{equation*}B(X_{\frak{m}_1},X_{\frak{m}_1})=B([X_1,X_2],X_{\frak{m}_1})=B([X_{\frak{m}_1},X_1],X_2)=0,\end{equation*} 
\begin{equation*}B(X_{\frak{m}_2},X_{\frak{m}_2})=B([X_1,X_2],X_{\frak{m}_2})=B([X_2,X_{\frak{m}_1}],X_1)=0.\end{equation*} 
Therefore, $X_{\frak{k}}=X_{\frak{m}_1}=X_{\frak{m}_2}=0$, which proves that $[\frak{m}_1,\frak{m}_2]\subset \frak{m}_3.$\end{proof}

\begin{remark}\label{lts}
If we set $\frak{g}_i=\frak{k}\oplus \frak{m}_i$ $(i=1,2,3)$,  then the $\frak{m}_i$ equipped with the operation\\
 $[[\cdot, \cdot]\cdot ,\cdot ]:\frak{m}_i\times \frak{m}_i\times \frak{m}_i\rightarrow \frak{m}_i$ is a Lie triple system of $\frak{g}$.  Therefore, $(\frak{g}_i,\frak{k})$ is a symmetric pair and the spaces $M_i=\exp(\frak{g}_i)\cdot o$ $(i=1,2,3)$, are totally geodesic symmetric submanifolds of $G/K$.
\end{remark}

\section{Geodesics in generalised Wallach spaces}

Let $M=G/K$ be a generalised Wallach space 
with a   $G$-invariant metric  determined  by  the $\operatorname{Ad}(K)$-invariant inner product on 
$\frak{m} = \frak{m}_1\oplus\frak{m}_2\oplus\frak{m}_3$ of the form
\begin{equation} \label{product} 
\langle \ , \ \rangle=\lambda_1(-B)|_{\frak{m}_1}+\lambda_2(-B)|_{\frak{m}_2}+\lambda_3(-B)|_{\frak{m}_3}
= (\lambda_1,\lambda_2,\lambda_3), \quad (\lambda _i>0).
\end{equation}

Let $X_1+X_2+X_3 \in \frak{m}$ with $X_i\in \frak{m}_i$, $(i=1,2,3)$.  We look for geodesics in $(M,g)$ through $o$ of the form 
\begin{equation} \label{geodesicform} \gamma(t)=\exp(tX) \exp(tY) \exp(tZ)\cdot o, \end{equation}
satisfying 
\begin{equation} \label{tangent} \dot{\gamma}(0)=X_1+X_2+X_3, \end{equation} and\\ 
\begin{equation} \label{linear}
\begin{array}{lll}
Z=a_1X_1+a_2X_2+a_3X_3\ \\\\

Y=b_1X_1+b_2X_2+b_3X_3\ \\\\

X=c_1X_1+c_2X_2+c_3X_3,
\end{array}
\end{equation}
where $a_i, b_i, c_i \in \mathbb R$ $(i=1,2,3)$.  
 It will turn out that if we impose the geodesic condition on such curves, then certain restrictions on the parameters $\lambda_i$ of  the $G$-invariant metric $g$ emerge. 
 
\begin{prop} \label{restriction} 
Let $M=G/K$ be a generalised Wallach space equipped with a $G$-invariant metric $g$ determined by the scalar product $\langle \ ,\  \rangle=(\lambda_1,\lambda_2,\lambda_3)$.  Let $\gamma:I\subset \mathbb R\rightarrow M$ be a geodesic on $(M,g)$ of the form $\gamma(t)=\exp(tX)\exp(tY)\exp(tZ)\cdot o$, where $X,Y,Z \in \frak{m}$ are given by (\ref{linear}), such that $\gamma(0)=o$, $\dot{\gamma}(0)=X_1+X_2+X_3 \in \frak{m}$ ($X_i\in \frak{m}_i$, $(i=1,2,3)$) and $[X_i, X_j]\neq 0$ for $i\neq j$.  Then (up to scalar) one of the following possibilities is valid:\\

\textbf{1})\qquad $\langle \ ,\  \rangle=(1,1,c)$ and $\gamma(t)=\exp t(X_1+X_2+cX_3)\exp t(1-c)X_3\cdot o$\\

\textbf{2})\qquad $\langle \ , \ \rangle=(1,c,1)$  and $\gamma(t)=\exp t(X_1+cX_2+X_3)\exp t(1-c)X_2\cdot o$\\

\textbf{3})\qquad $\langle \ , \ \rangle=(c,1,1)$  and $\gamma(t)=\exp t(cX_1+X_2+X_3)\exp t(1-c)X_1\cdot o$,\\

where $c>0$.
\end{prop}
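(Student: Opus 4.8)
The plan is to read off the metric restrictions from the fact that, by Proposition \ref{nikos}, a geodesic satisfies $G_W(t)=0$ for all $W\in\frak{m}$ and all $t$; in particular the two necessary conditions $G_W(0)=0$ and $\dot G_W(0)=0$ must hold, and I will show that these already force the three cases. First, differentiating $\alpha(t)=\exp(tX)\exp(tY)\exp(tZ)$ at $0$ gives $\dot\gamma(0)=(X+Y+Z)_{\frak m}$, so (\ref{tangent}) and (\ref{linear}) yield the normalizations $a_i+b_i+c_i=1$ $(i=1,2,3)$, which I use throughout to eliminate one family of parameters. For the zeroth-order condition I set $t=0$ in Proposition \ref{generalform}, where $T(0)=\Id$, obtaining
\begin{equation*}
G_W(0)=g\big(X+Y+Z,[W,X+Y+Z]_{\frak m}\big)_o+g\big(W,[X,Y+Z]_{\frak m}+[Y,Z]_{\frak m}\big)_o.
\end{equation*}

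Taking $W=W_k\in\frak m_k$ and writing $g=\sum_i\lambda_i(-B)|_{\frak m_i}$, I repeatedly use the $\Ad$-invariance of $B$ (so that $\ad(W_k)$ is $B$-skew) together with $[\frak m_i,\frak m_i]\subset\frak k$ and $[\frak m_i,\frak m_j]\subset\frak m_l$ from (\ref{brackets}) and (\ref{rel3}). Every surviving term then becomes a scalar multiple of $(-B)(W_k,[X_i,X_j])$, with $\{i,j,k\}=\{1,2,3\}$; the scalar is a difference of two $\lambda$'s plus $\lambda_k$ times a quadratic $\kappa_k$ in the entries of (\ref{linear}) (for instance $\kappa_3=(c_1b_2-c_2b_1)+(c_1a_2-c_2a_1)+(b_1a_2-b_2a_1)$). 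Since $[X_i,X_j]\neq0$ and $-B$ is nondegenerate on $\frak m_k$, choosing $W_k=[X_i,X_j]$ forces this scalar to vanish, giving three equations of the shape $\lambda_i-\lambda_j=\lambda_k\kappa_k$, cyclically in $k$.

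For the first-order condition I differentiate the expression of Proposition \ref{generalform}. By Lemma \ref{calculations}, $\tfrac{d}{dt}\big|_0 TX=[X,Y+Z]$ and $\tfrac{d}{dt}\big|_0 TY=[Y,Z]$, and both already lie in $\frak m$ because $[\frak m_i,\frak m_j]\subset\frak m_l$. Writing $P'=[X,Y+Z]$, $Q'=[Y,Z]$ and pairing $G_{W_k}$ with $W_k\in\frak m_k$, a single application of $\Ad$-invariance shows that the two pieces produced by differentiating the first line of (\ref{*}) combine into $(\lambda_i-\lambda_j)$ times the pairing of $W_k$ with a $\frak m_k$-valued double bracket, while the second line of (\ref{*}) contributes the same double brackets weighted by $\lambda_k$. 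Testing against $W_k=X_k$ (and, where needed, against $[X_i,X_j]$) collapses every such double bracket into one of the positive module norms $n_l:=(-B)([X_{l+1},X_{l+2}],[X_{l+1},X_{l+2}])$, again by $\Ad$-invariance. This yields three further scalar equations involving the $\lambda_i$, the $n_l$, and the entries of (\ref{linear}).

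It then remains to solve the combined system: the three zeroth-order equations, the three first-order equations, and the three normalizations. Up to scaling the metric I set $\lambda_1=1$, leaving $\lambda_2,\lambda_3$ and the nine coefficients $a_i,b_i,c_i$ as unknowns. Eliminating the coefficients through the normalizations and the $\kappa_k$-relations, the system forces two of the three $\lambda_i$ to coincide; by the cyclic symmetry of the whole setup in the indices $1,2,3$, the three ways this can occur produce exactly $(1,1,c)$, $(1,c,1)$, $(c,1,1)$, and back-substitution pins down the coefficients so that one of the three exponential factors degenerates to the identity, leaving the stated two-term geodesics. I expect the main obstacle to be precisely this last step: keeping the nonlinear elimination manageable and, above all, verifying that the second-order bracket terms in $\dot G_{W_k}(0)$ really do reduce to the scalars $n_l$, so that the geodesic condition becomes a genuinely scalar system. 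Once that reduction is secured the case analysis is routine, and the complementary fact that each of these metrics does yield a geodesic is the separate direct verification recorded in Theorem \ref{maintheorem}.
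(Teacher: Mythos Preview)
Your overall strategy coincides with the paper's: normalize to $\lambda_1=1$, impose $G_W(0)=0$ and $\dot G_W(0)=0$, and solve the resulting polynomial system. Your zeroth--order analysis is correct and yields exactly the paper's three equations (\ref{EQ1})--(\ref{EQ3}).

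The gap is in the first--order step. For $W\in\frak m_k$ the quantity $\dot G_W(0)$ does not reduce to a single scalar times one functional of $W$; it is a combination of \emph{two} linear functionals,
\[
W\longmapsto B\big([X_k,X_i],[W,X_i]\big)=B\big([X_i,[X_k,X_i]],W\big)
\quad\text{and}\quad
W\longmapsto B\big([X_k,X_j],[W,X_j]\big),
\]
with $\{i,j,k\}=\{1,2,3\}$. Under the hypothesis $[X_i,X_j]\neq 0$ these are generically independent on $\frak m_k$, so requiring $\dot G_W(0)=0$ for \emph{all} $W\in\frak m_k$ forces both coefficients to vanish separately. This is how the paper obtains \emph{six} equations (\ref{EQ4})--(\ref{EQ9}), each purely polynomial in $\lambda_2,\lambda_3,a_i,b_i$, with no extraneous parameters. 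Your procedure of testing only at $W_k=X_k$ produces a single equation per $k$, of the form $A_k\,n_j+B_k\,n_i=0$ with the $n_l$ fixed positive numbers; this is strictly weaker than $A_k=B_k=0$ and leaves a system of $3+3$ equations in $8$ unknowns, which does not force two of the $\lambda_i$ to coincide. Your alternative test vector $W_k=[X_i,X_j]$ does not help either: the resulting pairings involve triple brackets such as $B\big([X_k,X_i],[[X_i,X_j],X_i]\big)$, which do \emph{not} collapse to the norms $n_l$, contrary to your claim.

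Once the six first--order equations are in hand, the paper's argument is exactly the elimination you anticipate (carried out there by symbolic computation), giving the six solution families (\ref{s1})--(\ref{s6}) and hence the three stated cases. So the missing ingredient is not the reduction to $n_l$, but rather the observation that varying $W$ freely over each $\frak m_k$ separates the two coefficients and doubles the number of first--order equations.
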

 
\begin{proof} Since the
diagonal metric $(\lambda_1,\lambda_2,\lambda_3)$  is a scalar multiple of the metric $(1,\frac{\lambda_2}{\lambda_1},\frac{\lambda_3}{\lambda_1})$,   Koszul's formula implies that  if $\gamma$ is an unparametrised geodesic on a Riemannian manifold $(M,g)$, then $\gamma$ is also an unparametrised geodesic on $(M,cg)$ $(c\in \mathbb R^+)$.  Therefore, for our purposes we need only to consider metrics of the form 
$(1,\lambda_2,\lambda_3)$.
Relation (\ref{geodesicform}) implies that $\dot{\gamma}(0)=X+Y+Z$ so
condition (\ref{tangent}) and relations (\ref{linear}) imply that 
\begin{equation} \label{sum} 
a_i+b_i+c_i=1, \qquad  (i=1,2,3).
\end{equation} 
Assume that the curve (\ref{geodesicform}) is a geodesic.  For any $W\in \frak{m}$, Proposition \ref{nikos} implies that 
\begin{eqnarray} \label{system1} 
&&G_W(0)=0\\
\label{system2} &&\dot{G}_W(0)=0.
\end{eqnarray} 
By solving the above system, we will determine the values of the coefficients $a_i, b_i, c_i$ and $\lambda_i$.
   By using (\ref{*}), (\ref{T}) and equation (\ref{system1}) it follows that 
\begin{equation} \label{system11}
\begin{array}{lll}
 G_W(0)&=&g(X_{\frak{m}}+Y_{\frak{m}}+Z_{\frak{m}},[W, X+Y+Z]_{\frak{m}})_o\ \\\\

&+&g(W,[X, Y+Z]_{\frak{m}}+[Y, Z]_{\frak{m}})_o=0.
\end{array}
\end{equation}
Since equation (\ref{system11}) is true for all $W$ and in view of decomposition (\ref{dec}), we need to consider the following three simultaneous cases:

$$ a)\ W\in \frak{m}_1, \qquad 
b)\ W\in \frak{m}_2,  \qquad
c)\  W\in \frak{m}_3.
$$
For case $a)$ we use that $\dot{\gamma}(0)=X+Y+Z=X_1+X_2+X_3$, so equation (\ref{system11}) is equivalent to
\begin{equation} \label{system111}
\begin{array}{lll}
&& g([W, X_1+X_2+X_3]_{\frak{m}}, X_1+X_2+X_3)_o\ \\\\

&&\ \ \ +g(W, [X,X_1+X_2+X_3]_{\frak{m}}+[Y, Z]_{\frak{m}})_o=0. 
\end{array}
\end{equation}
By using expressions (\ref{linear}) for $X,Y,Z$, equation (\ref{sum}), the bracket relations (\ref{brackets}) and the orthogonality of the spaces $\frak{k},\frak{m}_1,\frak{m}_2,\frak{m}_3$,  equation (\ref{system111}) gives
\begin{equation} \label{system1111}
\begin{array}{lll}
 && g([W, X_2], X_3)_o+g([W, X_3], X_2)_o+(a_3-a_2+b_3-b_2)g(W, [X_2, X_3])_o\ \\\\

&&\ \ +(b_2a_3-b_3a_2)g(W, [X_2, X_3])_o=0.
\end{array}
\end{equation}
Also, by using (\ref{product}) and the $\operatorname{ad}$-skew symmetry of the Killing form of $\frak{g}$, equation (\ref{system1111}) implies that
\begin{equation*}
\Big(a_3-a_2+b_3-b_2+b_2a_3-b_3a_2-\lambda_2+\lambda_3\Big)B(W, [X_2, X_3])=0,
\end{equation*}
for every $W\in \frak{m}_1$, which gives that
\begin{equation} \label{EQ1} a_3-a_2+b_3-b_2+b_2a_3-b_3a_2=\lambda_2-\lambda_3. \end{equation}
Similarly, for cases $b)$ and $c)$, equation (\ref{system11}) yields 
\begin{eqnarray} 
\label{EQ2} a_3-a_1+b_3-b_1+b_1a_3-b_3a_1=\frac{1-\lambda_3}{\lambda_2}\\
\label{EQ3} a_2-a_1+b_2-b_1+b_1a_2-b_2a_1=\frac{1-\lambda_2}{\lambda_3},
\end{eqnarray} respectively.  
Therefore, condition $G_W(0)=0$ is equivalent to the system of equations (\ref{EQ1})-(\ref{EQ3}).\\
To compute $\dot{G}_W(0)$ we use relation (\ref{lab1}) to differentiate (\ref{*}).
   By using the Jacobi identity and $\dot{\gamma}(0)=X+Y+Z=X_1+X_2+X_3$ it follows that
\begin{equation} \label{system22}
\begin{array}{lll}
&& \dot{G_W}(0)=g([X+Y,Y+Z]_{\frak{m}}, [W, X_1+X_2+X_3]_{\frak{m}})_o\ \\\\

&&\ +g(W, [[X_1+X_2+X_3,Z],Z]_{\frak{m}})_o+g(W, [[X,Y], Y+2Z]_{\frak{m}})_o\ \\\\

&&\ +g(X_1+X_2+X_3, [W, [Y,Z]]_{\frak{m}})_o\ \\\\

&&\ -g([[X,X_1+X_2+X_3],W]_{\frak{m}}, X_1+X_2+X_3)_o=0.
\end{array}
\end{equation}    
As before, by the linearity of (\ref{system22})  we need to consider the following simultaneous cases:
$$ a)\ W\in \frak{m}_1, \qquad 
b)\ W\in \frak{m}_2,  \qquad
c)\  W\in \frak{m}_3.
$$
For case $a)$ we use the orthogonality of the spaces $\frak{k},\frak{m}_1,\frak{m}_2,\frak{m}_3$, the inner product (\ref{product}) and relations (\ref{linear}) and (\ref{sum}), and equation (\ref{system22}) reduces to

\begin{eqnarray*}
\begin{array}{lll}
\Big((1-\lambda_2)b_2+2(1-\lambda_2)a_2+\lambda_3b_2a_1-\lambda_3b_2a_2-\lambda_3a_2^2+\lambda_3a_1a_2\ \\\\
-(\lambda_3-\lambda_2)(1-\lambda_2)\Big)B([X_1, X_2], [W, X_2]) \\\\
+\Big((1-\lambda_3)b_3+2(1-\lambda_3)a_3+\lambda_2b_3a_1-\lambda_2b_3a_3-\lambda_2a_3^2+\lambda_2a_1a_3\ \\\\
-(\lambda_2-\lambda_3)(1-\lambda_3)\Big)B([X_1, X_3], [W, X_3])=0,
\end{array}
\end{eqnarray*}

from which we obtain the following two equations:
\begin{equation} 
\label{EQ4}
\begin{array}{lll}
&& \noindent (1-\lambda_2)b_2+2(1-\lambda_2)a_2+\lambda_3b_2a_1-\lambda_3b_2a_2-\lambda_3a_2^2
   +\lambda_3a_1a_2\ \\\\

&&\ =(\lambda_3-\lambda_2)(1-\lambda_2),
\end{array}
\end{equation}

\begin{equation} \label{EQ5}
\begin{array}{lll}
&& \noindent (1-\lambda_3)b_3+2(1-\lambda_3)a_3+\lambda_2b_3a_1-\lambda_2b_3a_3-\lambda_2a_3^2+\lambda_2a_1a_3\ \\\\

&&\ =(\lambda_2-\lambda_3)(1-\lambda_3).
\end{array}
\end{equation}

Similarly, cases $b)$ and $c)$ yield the equations:
\begin{equation}
\begin{array}{lll} 
 \label{EQ6} &&\lambda_2(1-\lambda_2)b_1+2\lambda_2(1-\lambda_2)a_1+\lambda_2\lambda_3b_1a_1-\lambda_2\lambda_3b_1a_2+\lambda_2\lambda_3a_1^2-\lambda_2\lambda_3a_1a_2\ \\\\

&&\ \ =(\lambda_2-1)(1-\lambda_3),
\end{array}
\end{equation}

 \begin{equation}
\begin{array}{lll}
 \label{EQ7} && \lambda_2(\lambda_2-\lambda_3)b_3+2\lambda_2(\lambda_2-\lambda_3)a_3+\lambda_2b_3a_2-\lambda_2b_3a_3-\lambda_2a_3^2+\lambda_2a_2a_3\ \\\\

&&\ \ =(1-\lambda_3)(\lambda_2-\lambda_3),
\end{array}
\end{equation}

 \begin{equation}
\begin{array}{lll}
 \label{EQ8} && \lambda_3(1-\lambda_3)b_1+2\lambda_3(1-\lambda_3)a_1+\lambda_2\lambda_3b_1a_1-\lambda_2\lambda_3b_1a_3+\lambda_2\lambda_3a_1^2-\lambda_2\lambda_3a_1a_3\ \\\\

&&\ \ =(\lambda_2-1)(1-\lambda_3),
\end{array}
\end{equation}

\begin{equation}
\begin{array}{lll}
 \label{EQ9} && \lambda_3(\lambda_2-\lambda_3)b_2+2\lambda_3(\lambda_2-\lambda_3)a_2+\lambda_3b_2a_2-\lambda_3b_2a_3+\lambda_3a_2^2-\lambda_3a_2a_3\ \\\\

&&\ \ =(\lambda_2-\lambda_3)(1-\lambda_2).
\end{array}
\end{equation}

Therefore, the equation $\dot{G}_W(0)=0$ is equivalent to the system of equations (\ref{EQ4})-(\ref{EQ9}).\\
  To summarise, the system of equations (\ref{system1})-(\ref{system2}) is equivalent to the system (\ref{EQ1})-(\ref{EQ3}), (\ref{EQ4})-(\ref{EQ9}).  By using a program of   symbolic computation we  obtain the following relations among the variables $a_i, b_i$, $(i=1,2,3)$, $\lambda_1,\lambda_2$:

\begin{equation} \label{s1} \lambda_2=1, a_1=a_2=b_1=b_2=0, b_3=1-a_3-\lambda_3\end{equation}

\begin{equation} \label{s2} \lambda_2=1, a_1=a_2=0, a_3=1-\lambda_3, b_1=b_2, b_3=\lambda_3b_2\end{equation}

\begin{equation} \label{s3} \lambda_3=1, a_1=a_3=b_1=b_3=0, b_2=1-a_2-\lambda_2\end{equation}

\begin{equation} \label{s4} \lambda_3=1, a_1=a_3=0, a_2=1-\lambda_2, b_1=b_3=\frac{b_2}{\lambda_2}\end{equation}

\begin{equation} \label{s5} \lambda_2=\lambda_3, a_2=a_3=0, a_1=\frac{\lambda_3-1}{\lambda_3}, b_2=b_3=\lambda_3b_1\end{equation}

\begin{equation} \label{s6} \lambda_2=\lambda_3, a_2=a_3=b_2=b_3=0, b_1=\frac{\lambda_3-a_1\lambda_3-1}{\lambda_3}.\end{equation}

By (\ref{s1}) and setting $c=\lambda_3$ we obtain that $\langle \ , \ \rangle=(1,1,c)$.  Using relations (\ref{linear}), we have that $X=X_1+X_2+cX_3, Y=(1-a_3-c)X_3, Z=a_3X_3$ and since $Y$ is parallel to $Z$, (\ref{geodesicform}) implies that
$$
\gamma(t)=\exp(tX)\exp t(Y+Z)\cdot o=\exp t(X_1+X_2+cX_3)\exp t(1-c)X_3\cdot o,
$$
which proves conclusion \textit{\textbf{1)}} of the proposition.
Solution (\ref{s2}) also yields $g=(1,1,c)$ and $X=(1-b_1)X_1+(1-b_1)X_2+(1-b_1)cX_3,Y=b_1X_1+b_1X_2+cb_1X_3,Z=(1-c)X_3$.  Using (\ref{geodesicform}) and since $X$ is parallel to $Y$ we have that $\gamma(t)=\exp t(X+Y) \exp (tZ)\cdot o=\exp t(X_1+X_2+cX_3)\exp t(1-c)X_3\cdot o.$  This also yields conclusion \textit{\textbf{1)}} of the proposition.
By setting $c=\lambda_2$ similar computations imply that solutions (\ref{s3}) and (\ref{s4}) give
 conclusion \textit{\textbf{2)}} of the proposition. Finally, for $c=\frac{1}{\lambda_2}$  solutions (\ref{s5}) and (\ref{s6}) imply conclusion
\textit{\textbf{3)}}. 
 \end{proof} 

Next we show that the three curves obtained in Proposition \ref{restriction} are indeed geodesics.
\begin{theorem} \label{final} Let $G$ be a connected Lie group and $M=G/K$ be a generalised Wallach space.  If the $G$-invariant metric  (\ref{metric}) has one of the forms 
 $(1,1,c)$, $(1,c,1)$ or $(c,1,1)$ (up to scalar), then the unique geodesic $\gamma (t)$ passing through $o$ with 
$\dot{\gamma}(0)=X_1+X_2+X_3 \in T_o(G/K)$, $X_i\in \frak{m}$, is given by

\begin{eqnarray}\label{main}
\gamma(t)&=&\exp t(X_1+X_2+cX_3)\exp (t(1-c)X_3)\cdot o,\\
\gamma(t)&=&\exp t(X_1+cX_2+X_3)\exp (t(1-c)X_2)\cdot o,\\
\gamma(t)&=&\exp t(cX_1+X_2+X_3)\exp (t(1-c)X_1)\cdot o
\end{eqnarray}

respectivelly. 
\end{theorem}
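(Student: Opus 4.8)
The plan is to verify directly, through the geodesic criterion of Proposition \ref{nikos}, that each of the three candidate curves satisfies $G_W(t)=0$ for every $W\in\fr m$ and every $t$; together with the uniqueness of the geodesic with prescribed initial point and velocity (standard ODE theory), this identifies each curve as \emph{the} geodesic in question. Since the defining relations (\ref{brackets}) and the induced relations (\ref{rel3}) are symmetric in the three summands, and the three metrics $(1,1,c)$, $(1,c,1)$, $(c,1,1)$ are permutations of one another under relabelling $\fr m_1,\fr m_2,\fr m_3$, it suffices to treat the first case: the metric $(1,1,c)$ and the curve $\gamma(t)=\exp t(X_1+X_2+cX_3)\exp(t(1-c)X_3)\cdot o$. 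First I would record that $\dot\gamma(0)=X_1+X_2+cX_3+(1-c)X_3=X_1+X_2+X_3$, as required, so that only the geodesic equation remains to be checked.

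Next I would apply Proposition \ref{generalform} with $X=X_1+X_2+cX_3$, $Y=(1-c)X_3$ and $Z=0$, so that $T(t)=\operatorname{Ad}(\exp(-t(1-c)X_3))$. The decisive structural step is to understand the action of $T$: setting $s=-t(1-c)$, relation (\ref{lab2}) gives $\operatorname{Ad}(\exp(sX_3))X_3=X_3$, while relations (\ref{rel3}) (namely $[\fr m_3,\fr m_1]\subset\fr m_2$ and $[\fr m_3,\fr m_2]\subset\fr m_1$) show that $\operatorname{ad}(X_3)$ carries $\fr m_1\oplus\fr m_2$ into itself. Hence $T$ fixes $X_3$ and preserves $\fr m_1\oplus\fr m_2$, so that $TY=Y$ and $TX=T(X_1+X_2)+cX_3$. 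Writing $R=T(X_1+X_2)=R_1+R_2$ with $R_i\in\fr m_i$, one has $TX+TY=R+X_3$ and $[TX,TY]=(1-c)[R,X_3]$, and formula (\ref{*}) collapses to
\begin{equation*}
G_W(t)=g\big(R_1+R_2+X_3,\,[W,R_1+R_2+X_3]_{\fr m}\big)_o+(1-c)\,g\big(W,\,[R_1+R_2,X_3]_{\fr m}\big)_o.
\end{equation*}

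I would then establish the vanishing of $G_W(t)$ by splitting into the three cases $W\in\fr m_1$, $W\in\fr m_2$, $W\in\fr m_3$, in each case expanding the brackets via (\ref{rel3}), (\ref{brackets}) and the orthogonality of the $\fr m_i$, and reducing the resulting terms to a common shape through the $\operatorname{ad}$-invariance of the Killing form $B$ (recalling that $g(\cdot,\cdot)_o$ restricts to $\lambda_i(-B)$ on $\fr m_i$). For $W\in\fr m_1$ the first summand produces $-c\,B(X_3,[W,R_2])-B(R_2,[W,X_3])$, which after one use of $\operatorname{ad}$-invariance equals $(c-1)B(R_2,[W,X_3])$ and exactly cancels the second summand $(1-c)B(R_2,[W,X_3])$; the case $W\in\fr m_2$ is identical with $R_1,R_2$ interchanged, and here the matching of the coefficient $(1-c)$ of $X_3$ in $\gamma$ with the weight $\lambda_3=c$ is what forces the cancellation.

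The main obstacle — and the precise point where the special form of the metric enters — is the case $W\in\fr m_3$. Here the second summand vanishes, and the first reduces to $\lambda_2 B(R_2,[W,R_1])+\lambda_1 B(R_1,[W,R_2])$, a sum of an $\fr m_1$-contribution and an $\fr m_2$-contribution that, by $\operatorname{ad}$-invariance, cancel \emph{precisely when} $\lambda_1=\lambda_2$; with unequal weights the residue is $(\lambda_2-\lambda_1)B(R_1,[W,R_2])\neq0$ in general. This is the structural reason why only the metrics $(1,1,c)$, $(1,c,1)$, $(c,1,1)$ admit geodesics of this form, and it is the converse companion to Proposition \ref{restriction}. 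Conceptually it reflects the picture of Remark \ref{lts}: the factor $\exp(t(1-c)X_3)$ moves inside the totally geodesic symmetric submanifold $M_3$, and $\operatorname{Ad}(\exp(sX_3))$ acts isometrically on $\fr m_1\oplus\fr m_2$ exactly because the metric is unweighted there, which is the interpretation to be developed via Riemannian submersions in the final section.
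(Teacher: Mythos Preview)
Your proposal is correct and follows essentially the same approach as the paper: set $Z=0$, apply Proposition \ref{generalform}, observe that $T=\operatorname{Ad}(\exp(-t(1-c)X_3))$ fixes $X_3$ and preserves $\fr m_1\oplus\fr m_2$, and then verify cancellation using the $\operatorname{ad}$-invariance of $B$ together with the bracket relations (\ref{brackets}), (\ref{rel3}). The only organizational difference is that the paper keeps $TX_1,TX_2$ separate and treats $W$ as arbitrary throughout, grouping the final expression as $B(W,[TX_1,TX_3])(1-c+c-1)+B(W,[TX_2,TX_3])(1-c+c-1)+B(W,[TX_1,TX_2])(1-1)=0$, whereas you decompose $R=T(X_1+X_2)$ by $\fr m_i$-component and split into the three cases $W\in\fr m_i$; your presentation has the advantage of making transparent exactly where the hypothesis $\lambda_1=\lambda_2$ is used.
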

\begin{proof} We will prove equation (\ref{main}) and the others can be shown by a similar manner.  Assume that the $G$-invariant metric is given by $\langle \ , \ \rangle=(1,1,c)$.  
The curve (\ref{main}) is of the form (\ref{geodesicform}) in which, without loss of generality, we set $X=X_1+X_2+cX_3, Y=(1-c)X_3, Z=0$.  Let $W\in \frak{m}$ be arbitrary.  We need to verify proposition \ref{nikos} for $G_W(t)$ given by (\ref{*}) (and with $Z=0$).  This is equivalent to 
\begin{equation} \label{**}G_W(t)=g(W, [TX, TY]_{\frak{m}})_o+g([W,TX+TY]_{\frak{m}}, TX+TY)_o=0.
\end{equation}
 Since $TX+TY=T(X+Y)=T(X_1+X_2+X_3)=TX_1+TX_2+TX_3$, we obtain that
\begin{equation} \label{***}
\begin{array}{lll}
&& G_W(t)=(1-c)g(W, [TX_1, TX_3]_{\frak{m}})_o+(1-c)g(W, [TX_2, TX_3]_{\frak{m}})_o\ \\\\

&&\ +g([W, TX_1]_{\frak{m}}, TX_1)_o+g([W,TX_1]_{\frak{m}},TX_2)_o+g([W,TX_1]_{\frak{m}}, TX_3)_o\ \\\\

&&\ +g([W, TX_2]_{\frak{m}}, TX_1)_o+g([W, TX_2]_{\frak{m}}, TX_2)_o+g([W, TX_2]_{\frak{m}}, TX_3)_o\ \\\\

&&\ +g([W, TX_3]_{\frak{m}}, TX_1)_o+g([W, TX_3]_{\frak{m}}, TX_2)_o+g([W, TX_3]_{\frak{m}}, TX_3)_o.     
\end{array}
\end{equation}

Since $Z=0$, relation (\ref{T}) implies that $T=\operatorname{Ad}(\exp(-tY))$ and by equation (\ref{lab2}) we obtain that $TY=Y$ and $TX_3=X_3$.  Moreover, since $G$ is connected, then the definition of $T$ implies that $TX_i\in [\frak{m}_3,\frak{m}_i]$ for $i=1,2$.  Therefore, $[TX_1,TX_3]\in \frak{m}_1$ and $[TX_2,TX_3]\in \frak{m}_2$.  Also, since $\langle  \ , \ \rangle=(-B)|_{\frak{m}_1}+(-B)|_{\frak{m}_2}+c(-B)|_{\frak{m}_3}$, the $\operatorname{ad}$-skew symmetry of the Killing form of $\frak{g}$ implies that $g([W, TX_i]_{\frak{m}}, TX_i)_o=B([W, TX_i]_{\frak{m}}, TX_i)=0$.  From the above discussion and the orthogonality of spaces $\frak{k},\frak{m}_1,\frak{m}_2,\frak{m}_3$, relation (\ref{***}) reduces to

\begin{equation*}
\begin{array}{lll}
&& G_W(t)=(1-c)B(W, [TX_1, TX_3])+(1-c)B(W, [TX_2, TX_3])\ \\\\

&&\ \ \ +B([W, TX_1], TX_2)+cB([W,TX_1],TX_3)+B([W,TX_2], TX_1)\ \\\\

&&\ \ \ +cB([W, TX_2], TX_3)+B([W, TX_3], TX_1)+B([W, TX_3], TX_2)_o\ \\\\

&&\ =B(W,[TX_1,TX_3])(1-c+c-1)+B(W,[TX_2,TX_3])(1-c+c-1)\ \\\\

&&\ \ \ +B(W,[TX_1,TX_2])(1-1)=0,     
\end{array}
\end{equation*}  
and this proves the theorem.
\end{proof}

The following proposition is an interesting consequence of Proposition \ref{nikos}.
\begin{prop}\label{remark} Let $G/K$ be a generalised Wallach space with $\fr{m}=\fr{m}_1\oplus \fr{m}_2\oplus \fr{m}_3$.  If any of the relations 
\begin{equation} 
\label{q}[\fr{m}_i,\fr{m}_j]=0\quad ( i\neq j )
\end{equation}
hold, then $G/K$ is a g.o. space for any diagonal metric $(1,\lambda_2, \lambda_3)$. 
That is any geodesic $\gamma$ of $G/K$ passing through $o$ and tangent to any $X\in \fr{m}$ is given by 

\begin{equation*}\gamma(t)=\exp(tX)\cdot o. \end{equation*}
\end{prop}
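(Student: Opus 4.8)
The plan is to reduce the hypothesis to the case of a locally symmetric space and then invoke the geodesic characterization of Proposition~\ref{nikos}. First I would observe that the three cross bracket relations of a generalized Wallach space are not independent once one of them is trivial: if, say, $[\fr{m}_1,\fr{m}_2]=0$, then in fact \emph{all three} cross brackets vanish. The tool here is the full cyclic invariance of the Killing form, $B([A,B],C)=B([C,A],B)=B([B,C],A)$, together with the fact that $B$ is (negative) definite, and hence nondegenerate, on each summand $\fr{m}_i$ — which is exactly what makes $-B|_{\fr{m}_i}$ admissible as a positive multiple in the metric (\ref{product}). Indeed, since $[\fr{m}_2,\fr{m}_3]\subset\fr{m}_1$ by (\ref{rel3}), for $A_i\in\fr{m}_i$ one computes $B([A_2,A_3],A_1)=B([A_1,A_2],A_3)=0$, and nondegeneracy of $B|_{\fr{m}_1}$ forces $[\fr{m}_2,\fr{m}_3]=0$; likewise $B([A_1,A_3],A_2)=B([A_2,A_1],A_3)=0$ gives $[\fr{m}_1,\fr{m}_3]=0$. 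After relabeling the summands, the same argument applies no matter which cross bracket is assumed to be trivial.

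Once all three cross brackets are trivial, combining them with the defining relations (\ref{brackets}) yields $[\fr{m},\fr{m}]\subset\fr{k}$, so that $G/K$ is a locally symmetric space. At this point I would fix an arbitrary $X\in\fr{m}$ and test the orbit $\gamma(t)=\exp(tX)\cdot o$ against Proposition~\ref{nikos}. Writing this curve in the form (\ref{geodesicform}) with $Y=Z=0$, the map (\ref{T}) reduces to $T(t)=\Ad(e)=\Id$, and formula (\ref{*}) of Proposition~\ref{generalform} collapses to
\[
G_W(t)=g\big(X,[W,X]_{\fr{m}}\big)_o .
\]
Since $[W,X]\in[\fr{m},\fr{m}]\subset\fr{k}$, its $\fr{m}$-component vanishes, whence $G_W(t)=0$ for every $W\in\fr{m}$ and every $t$. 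This computation is insensitive to the values of $\lambda_2,\lambda_3$, which is precisely why the conclusion holds for every diagonal metric $(1,\lambda_2,\lambda_3)$.

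By Proposition~\ref{nikos} the curve $\gamma(t)=\exp(tX)\cdot o$ is therefore a geodesic, and it satisfies $\gamma(0)=o$ and $\dot{\gamma}(0)=X$. Uniqueness of the geodesic with prescribed initial point and velocity identifies it as \emph{the} geodesic tangent to $X$; since $X\in\fr{m}$ was arbitrary, every geodesic through $o$ is an orbit of a one-parameter subgroup, i.e.\ $G/K$ is a g.o.\ space.

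I expect the main obstacle to be the first step: recognizing that the vanishing of a single cross bracket already propagates to the other two. Without this reduction the $\lambda_i$-dependence cannot be dropped — a direct expansion of $G_W$ for $W\in\fr{m}_3$ under the weaker assumption $[\fr{m}_1,\fr{m}_2]=0$ alone produces a term proportional to $(\lambda_2-\lambda_1)\,B(X_1,[W,X_2])$, which is generically nonzero as soon as $[\fr{m}_2,\fr{m}_3]\neq 0$. It is the definiteness of $B$ on each $\fr{m}_i$ that rescues the argument, and this point must be flagged with care.
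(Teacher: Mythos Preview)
Your proof is correct, but it takes a different route from the paper. You first establish a structural lemma: the vanishing of one cross bracket $[\fr{m}_i,\fr{m}_j]=0$ forces all three to vanish, so that $[\fr{m},\fr{m}]\subset\fr{k}$ and $G/K$ is locally symmetric; the geodesic claim then follows from $[W,X]_{\fr{m}}=0$. The paper instead keeps only the single hypothesis $[\fr{m}_1,\fr{m}_2]=0$ and expands $G_W(t)=g(X,[W,X]_{\fr{m}})$ term by term, killing each surviving summand with one application of the $\ad$-invariance of $B$. Your approach yields more information (the space is in fact locally symmetric), while the paper's is a shorter direct verification.

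One correction to your closing commentary: the direct expansion \emph{does} succeed without your reduction. The term you flag, $(\lambda_2-\lambda_1)\,B(X_1,[W,X_2])$ with $W\in\fr{m}_3$, vanishes already under $[\fr{m}_1,\fr{m}_2]=0$ alone, since by one further cycle
\[
B(X_1,[W,X_2]) \;=\; B([W,X_2],X_1) \;=\; B(W,[X_2,X_1]) \;=\; 0.
\]
So the $\lambda_i$-independence is visible in the direct computation as well; your structural lemma is a pleasant explanation of \emph{why} it works, not a necessary detour.
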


\begin{proof} Assume that $[\fr{m}_1,\fr{m}_2]=0$ (the other cases can be treated similarly).  We set $Y=Z=0$ in (\ref{T}) and in (\ref{*}).  Then $TX=X$ and $G_W(t)=g(X_{\fr{m}},[W,X]_{\fr{m}})$.\\
We choose $X=X_1+X_2+X_3\in \fr{m}$ and let $W=W_1+W_2+W_3\in \fr{m}$ be arbitrary $(X_i,W_i\in \fr{m}_i$, $i=1,2,3)$.  By using the assumption, the $\operatorname{ad}$-skew symmetry of the Killing form $B$, and relations (\ref{brackets}) and (\ref{rel3}) it follows that  
\begin{equation*}
\begin{array}{lll}
&&G_W(t)=g(X_{\fr{m}},[W,X]_{\fr{m}})=B(X_1,[W_2,X_3])+B(X_1,[W_3,X_2])\ \\\\

&&\ \ \qquad+\lambda_2B(X_2,[W_1,X_3])+\lambda_2B(X_2,[W_3,X_1])\ \\\\

&&\ \ \qquad+\lambda_3B(X_3,[W_1,X_2])+\lambda_3B(X_3,[W_3,X_1])=0.
\end{array}
\end{equation*} 

Therefore, Proposition \ref{nikos} implies that $\gamma(t)=\exp t(X_1+X_2+X_3)\cdot o$, which is a homogeneous geodesic.
\end{proof}

Notice that the condition $G_W(t)=g(X_{\fr{m}},[W,X]_{\fr{m}})=0$ is the well known condition for homogeneous geodesics originally presented in \cite{KV}.
\begin{example}  
\textnormal{
Let $M=E_6/(U(1)\times U(1)\times \operatorname{Spin}(8))$. 
This is  a generalized flag manifold whose isotropy representation decomposes into three irreducible non equivalent submodules.  These spaces have been classified in \cite{Ki}.
  Its painted Dynking diagram is obtained from the 
  Dynkin diagram of $E_6$, by painting black the simple roots $a_1,a_5$ with Dynkin mark $1$, as shown below.
}
\smallskip
\begin{center}
\begin{picture}(160,35)(-25,3)

\put(15, 10){\circle*{4}}
\put(15,0){\makebox(0,0){$\al_1$}}
\put(17, 10){\line(1,0){14}}
\put(33, 10){\circle{4.4}}
\put(33,0){\makebox(0,0){$\al_2$}}
\put(35, 10){\line(1,0){14}}
\put(51, 12){\line(0,1){14}}
\put(51, 10){\circle{4}}
\put(51,0){\makebox(0,0){$\al_3$}}
\put(51, 28){\circle{4}}
\put(51,35){\makebox(0,0){$\al_6$}}
\put(53,10){\line(1,0){14}}
\put(69,10){\circle{4}}
\put(69,0){\makebox(0,0){$\al_4$}}
\put(71,10){\line(1,0){14}}
\put(87,10){\circle*{4}}
\put(87,0){\makebox(0,0){$\al_5$}}

\end{picture}
\end{center}

\medskip
\noindent
\textnormal{
Let $R$ be the root system of $E_6$.  For $\al\in R$ we denote by $\frak{g}^\al$ the corresponding root space.  Any $\al\in R$ can be expressed as $\al=\sum k_i\al _i$ where $\al _i$ are the simple roots of $E_6$. 
We put $\Delta_{(1,0)}=\left\{\al\in R: k_1=1, k_5=0\right\}$, $\Delta_{(0,1)}=\left\{\al\in R:k_1=0, k_5=1\right\}$ and
$\Delta_{(1,1)}=\left\{\al\in R: k_1=1, k_5=1\right\}$.  
 Then the spaces
}
$$
\frak{m}_1=\sum_{\al\in \Delta_{(1,0)}}\frak{g}^\al, 
\quad \frak{m}_2=\sum_{\al\in \Delta_{(0,1)}}\frak{g}^\al, \quad
\frak{m}_3=\sum_{\al\in \Delta_{(1,1)}}\frak{g}^\al
$$
\noindent
\textnormal{satisfy the bracket relations (\ref{brackets}) and $\frak{m}=\frak{m}_1\oplus \frak{m}_2 \oplus \frak{m}_3$.
Moreover $g=(1,1,2)$ is a K\"ahler-Einstein metric of the form $(1,1,c)$.  Therefore $(M,g)$ satisfies the conditions of Theorem \ref{final} and the unique geodesic $\gamma$ passing through $o$ with $\dot{\gamma}(0)=X_1+X_2+X_3$, $X_i\in \frak{m}_i$ is given by
$$
\gamma(t)=\exp t(X_1+X_2+2X_3) \exp t(-X_3)\cdot o.
$$
According to \cite{Ki} the invariant  metrics $g^{1}=(1,2,1)$ and $g^{2}=(2,1,1)$ on 
$$
M=E_6/(U(1)\times U(1)\times \operatorname{Spin}(8))
$$ 
are also K\"ahler-Einstein, so the spaces $(M,g^{1})$, $(M,g^{2})$ also satisfy the conditions of Theorem \ref{maintheorem}.}
\end{example}

\section{Riemannian submersions}
A natural question raised in this paper is why the search for geodesics of the form (\ref{3exp}) in a generalized Wallach space, leads to the geodesics obtained in Proposition \ref{restriction} (with two exponential terms instead of three).
One way to explain this is to use the fact that to each generalized Wallach space $G/K$ there is an associated fibration $G/K\to G/H$, where the base space $G/H$ and the fiber $H/K$ are locally symmetric spaces (cf. \cite[pp. 46, Remark 1]{NRS}).  More precisely, let 
$\frak{g}=\frak{k}\oplus\frak{m}=
\frak{k}\oplus\frak{m}_1\oplus\frak{m}_2\oplus\frak{m}_3
$
be a reductive decomposition of $\frak{g}$ with $[\frak{m}_i, \frak{m}_i]\subset\frak{k}$, and
let $\frak{g}_i=\frak{k}\oplus\frak{m}_i$. If $G_i$ is the simply connected Lie group with Lie algebra
$\frak{g}_i$,  then it can be shown that the spaces $G_i/K$ and $G/G_i$ are locally symmetric and there exist
   fibrations
\begin{equation}\label{fibrations}
G_i/K\to G/K\to G/G_i,\quad (i=1, 2, 3).
\end{equation}
The corresponding reductive decompositions of $G_i/K$ and $G/G_i$ are  $\frak{g}_i=\frak{k}\oplus\frak{m}_i$ and
$\frak{g}=(\frak{k}\oplus\frak{m}_i)\oplus (\frak{m}_k\oplus\frak{m}_l)$ ($i, k, l$ distinct).
If the spaces $G_i/K, G/K$ and $G/G_i$ are equipped with an invariant metric then we obtain  Riemannian submersions with totally geodesic fibers.

We consider the $G$-invariant metric on $G/K$ given by
$$
g^{\mbox{sub}}=\lambda\left.(-B)\right|_{\frak{m}_k\oplus\frak{m}_l}+\mu\left.(-B)\right|_{\frak{m}_i},\quad
(\lambda, \mu >0),
$$
which is a special case of the $G$-invariant metric (\ref{metric}).
Since the base and  the fiber are  symmetric spaces, for which is well known that geodesics are given by one-parameter subgroups, it is natural to search for geodesics of the form
$\gamma(t)=\exp (tX)\exp(tY)\cdot o$, where $X\in\frak{m}_k\oplus\frak{m}_l$, $Y\in\frak{m}_i$ with respect to the submersion metric $g^{\mbox{sub}}$.
According to Theorem \ref{final} such geodesics are obtained if we take $\lambda =c$ and $\mu =1$.
Furthermore,  Proposition \ref{restriction} sheds light to the question whether there exists a $G$-invariant metric of the form $(1, \lambda _2, \lambda _3)$ so that the geodesics in a generalized Wallach space $G/K$ with the initial conditions stated there, are of the form (\ref{3exp}).

\begin{example}
\textnormal{
Let $G/K=SO(l+m+n)/(SO(l)\times SO(m)\times SO(n))$ with $n\ge m\ge l\ge 1$ and $(l, m)\ne (1, 1)$.  If we set
$G_i=SO(l+m)\times SO(n)$ then fibration (\ref{fibrations}) is given by
$$
SO(l+m)/(SO(l)\times SO(m))\to G/K\to SO(l+m+n)/(SO(l+m)\times SO(n)).
$$
The special case $l=m=1$ corresponds to the Stiefel manifold $SO(n+2)/SO(n)$.  Even though the isotropy representation of $SO(n+2)/SO(2)$ contains two equivalent summands, it has been shown in
\cite{Ke} that any $SO(n+2)$-invariant metric is diagonal.
}
\end{example}
\begin{example} 
\textnormal{Let $G/K=E_6/(U(1)\times U(1)\times \operatorname{Spin}(8))$ be the generalized flag manifold with $\fr{m}=\fr{m}_1\oplus \fr{m}_2\oplus \fr{m}_3$.
Let $\frak{g}_1=\frak{k}\oplus \frak{m}_1$ and let $G_1$ be the corresponding simply connected Lie group.
  It is easy to see that with the above notation $G/G_1$ is a symmetric space.  Indeed, let $\frak{m}'=T_o(G/G_1)=\frak{m}_2\oplus \frak{m}_3$.  Then it is 
$[\frak{m}', \frak{m}']\subset\frak{g}_1$ and  $[\frak{g}_1, \frak{m}']\subset \frak{m}'$.
To find the precise quotient  $G/G_1$, we look at all possible irreducible compact symmetric spaces of the form $E_6/G_1$, and these are
$E_6/Sp(4),  E_6/(SU(6)\times SU(2)), E_6/(SO(10)\times U(1))$ and $E_6/F_4$.
Our choice will be determined by the dimension of $\frak{g}_1$.
It is $\dim_{\mathbb R}\frak{k}=30$, and
  according to \cite[Proposition 2.6, p. 311]{Ki} it is
$\dim_{\mathbb R}\frak{m}_1=\dim_{\mathbb R}\frak{m}_2=\dim_{\mathbb R}\frak{m}_3=16$. 
 Hence, $\dim_{\mathbb R}\frak{g}_1=46=\dim_{\mathbb R}(\frak{so}(10)\oplus\frak{u}(1))$ so
  we conclude that $G/G_1=E_6/(SO(10)\times U(1))$.  Therefore,  fibration (\ref{fibrations}) is given by
\begin{equation*}
SO(10)/(SO(2)\times SO(8))\rightarrow E_6/(U(1)\times U(1)\times \operatorname{Spin}(8)) \rightarrow E_6/(SO(10)\times U(1)).
\end{equation*}  
}
\end{example}

\end{document}